\documentclass[12pt]{amsart}
\usepackage[all]{xy}
\usepackage[parfill]{parskip}

\usepackage{verbatim}
\usepackage{color}

\usepackage{amsmath, amscd, graphicx, latexsym, hyperref, times}
\usepackage{esint} 
\usepackage{graphicx}
\usepackage[abs]{overpic}
\usepackage{tikz}
\usepackage{tikz-cd}
\usetikzlibrary{arrows, patterns}

\oddsidemargin .25in \theoremstyle{plain}
\newtheorem{theorem}{Theorem}

\newtheorem{lemma}{Lemma}

\newtheorem{remark}{Remark}

\numberwithin{theorem}{section}
\numberwithin{equation}{section}
\numberwithin{lemma}{section}
\numberwithin{proposition}{section}
\numberwithin{corollary}{section}
\numberwithin{remark}{section}
\allowdisplaybreaks[3]

\newcommand{\diam}{\mathrm{diam}} 
\usepackage{cleveref}
\crefname{theorem}{Theorem}{Theorem}
\crefname{lemma}{Lemma}{Lemmas}
\crefname{proposition}{Proposition}{Propositions}
\crefname{question}{Question}{Questions}
\crefname{conjecture}{Conjecture}{Conjectures}
\crefname{equation}{}{}
\crefname{section}{Section}{Section}
\crefname{figure}{Figure}{Figure}
\crefname{theoremx}{Theorem}{Theorem}

\newcommand{\Ric}{\operatorname{Ric}}

\usepackage{appendix}
\begin{document}

\title[Obata under Bakry-Émery]{Obata-type equations under the Bakry-Émery Ricci curvature conditions}

\author{Mijia Lai}
\address{School of Mathematical Sciences, Shanghai Jiao Tong University, Shanghai 200240, China}
\email{laimijia@sjtu.edu.cn}

\author{Yiwei Liu}
\address{Shanghai Center for Mathematical Sciences, Fudan University, Shanghai 200438, China}
\email{liuyiwei@fudan.edu.cn}

\begin{abstract}
    In this paper, based on the warped product structures determined by the Obata-type equations with Robin boundary condition, we establish some rigidity results for compact manifolds with smooth boundary under appropriate Bakry-Émery Ricci curvature conditions and some other assumptions. 
\end{abstract}
\maketitle
\section{Introduction}

The classical Obata theorem \cite{Obata} is one of the fundamental rigidity results in
Riemannian geometry. It asserts that the existence of a nonconstant solution
to the equation
\begin{align} \label{eq:main}
    \nabla^2 f + f g =0
\end{align}
on a complete Riemannian manifold forces the manifold to be a standard sphere. 

A basic geometric feature of the Obata equation was already implicit in the classical theory of concircular scalar fields of Tashiro \cite{Tashiro}: the equation \eqref{eq:main}
locally determines a warped product structure around every regular point of $f$. Indeed one has a similar conclusion for a more general equation of the form $\nabla^2 f+\psi(f) g=0$, which we refer to as an Obata-type equation.

The local warped product structure, however, does not by itself determine the global geometry of the manifold. Critical sets of $f$ present various obstructions to a clearer global picture. To obtain a precise classification, one needs additional global information or curvature assumptions, usually in the format of Ricci curvature lower under. 

In the classical Obata theorem, {\bf completeness} is precisely the global
hypothesis that turns the local geometric information carried by the
equation into a global rigidity statement. The key is to restrict \eqref{eq:main} to a geodesic, which yields isolated global maximum and minimum points of $f$. This makes the regular level sets shrink to two points, implying a sphere in the underlying topology.

Obata-type equations arise frequently as the equality cases of sharp geometric and analytic inequalities. In particular, they appear in the rigidity analysis of eigenvalue estimates for the Laplacian and the Steklov problem, as well as in related questions in conformal geometry. A classical prototype is provided by the Lichnerowicz-Obata theorem.
If a closed \(n\)-dimensional Riemannian manifold satisfies
$    \Ric \ge (n-1)g,$
then the first nonzero eigenvalue of the Laplacian satisfies
$    \lambda_1\ge n.$  In the equality case, the Bochner formula and the corresponding equality
conditions imply that a first eigenfunction satisfies $\nabla^2 f+fg=0.$
Obata's rigidity theorem then shows that the manifold must be isometric to
the standard sphere. 

Such rigidity phenomenon have also been investigated for manifolds
with boundary. In fact, the Lichnerowicz eigenvalue estimate has natural counterparts in this setting, and the corresponding equality
cases lead directly to Obata equations with different boundary conditions.
Reilly \cite{Reilly} proved that if a compact Riemannian manifold
$(\Omega^n,g)$ satisfies
$ \Ric\ge (n-1)g$ and has mean convex boundary, then its first Dirichlet eigenvalue satisfies
$\lambda_1^D(\Omega)\ge n$. 
Moreover, equality holds precisely for the standard hemisphere. In the
equality case, a first Dirichlet eigenfunction satisfies
\[
    \nabla^2 f+fg=0
    \qquad\text{in }\Omega,
    \qquad
    f=0
    \qquad\text{on }\partial\Omega.
\]
Thus the rigidity part of Reilly's theorem can be viewed as a boundary
version of Obata's theorem with Dirichlet boundary condition.

The corresponding Neumann problem was studied by Escobar \cite{Escobar}
and Xia \cite{Xia}. Under the same positive Ricci curvature lower bound
and the stronger assumption that the boundary is convex, they established
the sharp estimate
$\lambda_1^N(\Omega)\ge n$
for the first nonzero Neumann eigenvalue, with equality again characterizing
the standard hemisphere. The equality case relies on the Obata equation subject to Neumann boundary condition.  Chen, Lai and Wang
\cite{ChenLaiWang} initiated a systematic study of the Obata equation with a nonzero constant Robin boundary condition. In contrast with the
Dirichlet and Neumann cases, the Robin problem exhibits a substantially
richer geometry: the existence of a solution does not in general force
the manifold to be a spherical domain, and additional warped product
models naturally arise. 

This development suggests a general point of view: the Obata-type equation
first determines a strong local, and often global, warped product structure,
while additional global geometric assumptions are needed to rigidify the
transverse factor and to determine the manifold completely. In particular, for problems arising from the characterization of
the equality cases of sharp geometric and analytic inequalities,
one usually has an additional lower Ricci curvature bound on the
ambient manifold together with a lower mean curvature bound on
the boundary. 

The purpose of
the present paper is to carry out this rigidity program under
Bakry-\'Emery Ricci curvature lower bounds \cite{BakryEmery}. Let $\phi$ be a smooth function
and let $m$ be an effective dimension. Recall that the $m$-Bakry-\'Emery
Ricci tensor is defined by
\[
    \Ric_{\phi,m}
    =
    \Ric+\nabla^2\phi
    -\frac{1}{m-\dim\Omega}\,d\phi\otimes d\phi,
\]
with the usual convention that $m=\dim\Omega$ is allowed only when $\phi$ is
constant. The weighted mean curvature of the boundary is
\[
    H_\phi=H-\frac{\partial\phi}{\partial\nu}.
\]

The passage from lower Ricci curvature bounds to lower
Bakry-\'Emery Ricci curvature bounds is natural from the viewpoint of smooth metric measure geometry. Indeed, once a weighted
measure $e^{-\phi}\,dV_g$ is introduced, the Bakry-\'Emery Ricci tensor plays the role of the
ordinary Ricci tensor in the corresponding Bochner formula and in many
comparison and eigenvalue estimates. In particular, a large part of the
classical theory under Ricci curvature lower bounds admits weighted
counterparts in which $\Ric$ is replaced by $\Ric_{\phi,m}$ and the
mean curvature of the boundary is replaced by the weighted mean curvature
$H_\phi$. Since the usual Riemannian setting is recovered when $\phi$ is
constant, it is therefore natural to ask whether the rigidity phenomena
associated with Obata-type equations persist in this broader weighted
framework. The results of the present paper show that this is indeed the
case: suitable lower Bakry-\'Emery Ricci curvature bounds, together with
the corresponding weighted boundary assumptions, are strong enough to
determine the global geometry of manifolds carrying solutions of the
Obata-type equations.

We shall treat three Obata-type equations uniformly. They correspond to the spherical, Euclidean,
and hyperbolic models respectively. Let $(\Omega^{n+1},g)$ be a smooth compact connected Riemannian manifold
with smooth boundary $\Sigma=\partial\Omega$, and let $\nu$ be the outward unit
normal along $\Sigma$. We first consider the Obata-type equations
\begin{equation*}
    \nabla^2f+\kappa f g=0,
\end{equation*}
where $\kappa=1,0,-1$. It should be pointed out that under Dirichlet boundary condition $f|_{\Sigma}=0$, the only solution is $f\equiv 0$ when $\kappa\leq0$; when $\kappa=1$, every nontrivial solution forces $\Omega$ to be isometric to the standard hemisphere (see \cite{Reilly}). We also point out that under Neumann boundary condition $f_{\nu}|_{\Sigma}=0$, $f$ is constant when $\kappa=0$ and $f\equiv0$ when $\kappa=-1$; when $\kappa=1$, every nontrivial solution also forces $\Omega$ to be isometric to the standard hemisphere (see \cite{Escobar} and \cite{Xia}). Therefore, in this paper, we mainly consider the Obata-type equations with Robin boundary condition, that is,
\begin{equation} \label{eq:three-obata}
\left\{
\begin{aligned}
  \nabla^2f+\kappa f g&=0 &&\text{in } \Omega,\\
f_\nu &= \lambda f &&\text{on } \Sigma,
\end{aligned}
\right.   
\end{equation}
where
\[
    (\kappa,\lambda)
    =
    (1,a),\qquad
    (0,b),\qquad
    (-1,c).
\]

We shall assume that 
\[a>0,\qquad b>0,\qquad c>1.\]
Thus in all three cases
$\mu:=\lambda^2+\kappa>0.$

The geometry determined by \eqref{eq:three-obata} has been investigated in
several recent works\cite{ChenLaiWang, LaiZhou}. In general, one only has a warped product structure over the central hypersurface
\[
    \Omega_0=\{x\in\Omega:f(x)=0\}.
\]
In the presence of isolated critical points, the geometry of the underlying manifold can in some cases be determined more precisely.  
If critical points are absent, several rigidity results have also been obtained by imposing lower curvature
bounds either on the boundary or on the ambient manifold. For example,
suitable lower bounds for $\Ric_\Sigma$ force the corresponding manifold to
be a geodesic ball in the sphere, Euclidean space, or hyperbolic space
\cite{LiuYang}. On the other hand, ambient Ricci curvature lower bounds
together with lower bounds for the mean curvature of the boundary lead to
similar rigidity results \cite{LiuYang,XiaXiong}. 

Our first main result is the following.

\begin{theorem}\label{thm:product-rigidity}
Let $(\Omega^{n+1},g)$ be a smooth compact connected Riemannian manifold
with smooth connected boundary $\Sigma$, and let
$f\in C^\infty(\Omega)$ be a nonconstant function. Assume that
\[
    (\Sigma,g|_\Sigma)
    \cong
    (M^k,g_M)\times(N^{n-k},g_N),
    \qquad k\ge2,
\]
where $M$ and $N$ are closed connected manifolds. Let
\[
    H_N=\operatorname{tr}_{TN}h
\]
denote the partial trace of the second fundamental form of $\Sigma$
along the $N$-directions. Then the following statements hold.

\begin{enumerate}

\item Let $f$ be a solution to   \eqref{eq:three-obata} with $(\kappa,\lambda)=(1, a)$ and 
 assume that $f$ has no critical points.
If there exist $\phi\in C^\infty(M)$ and
$m\in[k,\infty)$ such that
\[ \Ric^M_{\phi,m}
    \ge
    (m-1)(a^2+1)g_M,
\qquad 
    H_N\le-\frac{n-k}{a},
\]
then
\[
    (M,g_M)
    \cong
    \mathbb S^k\left(\frac1{\sqrt{a^2+1}}\right)
\]
and $\Omega$ is isometric to
\[
    (\left[0,\tan^{-1}\frac1a\right]\times\mathbb S^k\times N, \quad 
    g
    =
    dr^2+\sin^2r\,g_{\mathbb S^k}
    +\frac{a^2+1}{a^2}\cos^2r\,g_N).
\]

\item
Let $f$ be a solution to  \eqref{eq:three-obata} with $(\kappa,\lambda)=(0, b)$. If there exist $\phi\in C^\infty(M)$ and
$m\in[k,\infty)$ such that
\[  \Ric^M_{\phi,m}
    \ge
    (m-1)b^2g_M
\qquad 
   H_N\le0,\]
then
\[
    (M,g_M)
    \cong
    \mathbb S^k\left(\frac1b\right)
\]
and $\Omega$ is isometric to the product manifold
\[
    B^{k+1}\left(\frac1b\right)\times N.
\]

\item
Let $f$ be a solution to   \eqref{eq:three-obata} with $(\kappa,\lambda)=(-1, c)$.
 If there exist $\phi\in C^\infty(M)$ and
$m\in[k,\infty)$ such that
\[
    \Ric^M_{\phi,m}
    \ge
    (m-1)(c^2-1)g_M,
\qquad
   H_N\le\frac{n-k}{c},
\]
then
\[
    (M,g_M)
    \cong
    \mathbb S^k\left(\frac1{\sqrt{c^2-1}}\right)
\]
and $\Omega$ is isometric to
\[
    (\left[0,\tanh^{-1}\frac1c\right]\times\mathbb S^k\times N, \quad 
    g
    =
    dr^2+\sinh^2r\,g_{\mathbb S^k}
    +\frac{c^2-1}{c^2}\cosh^2r\,g_N).
\]

\end{enumerate}
\end{theorem}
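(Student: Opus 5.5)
We will use the warped product structure that the Obata-type equation forces when $f$ has no critical points. On $\Omega$ we have $\nabla^2 f=-\kappa f g$, so $|\nabla f|^2+\kappa f^2$ is constant; we normalize it to $1$. Then $r$, the arclength along the gradient flow of $f$ measured from a suitable level, gives $\Omega\cong [r_0,r_1]\times\Sigma_0$ with metric $dr^2+(f'(r))^2 g_{\Sigma_0}$, where $f=\sin r,\ r,\ \sinh r$ respectively. The Robin condition $f_\nu=\lambda f$ makes the boundary $\Sigma$ a level set of $f$ (or more generally a graph over the level sets). We plan to first show it is exactly a level set. On $\Sigma$ we use the Gauss-type relation: $f|_\Sigma$ satisfies $\nabla^2_\Sigma f+\kappa f g_\Sigma = -f_\nu h=-\lambda f h$, and $|\nabla_\Sigma f|^2=1-\kappa f^2-\lambda^2 f^2 = 1-\mu f^2$. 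So on $\Sigma$ the function $u=f|_\Sigma$ satisfies
\[
\nabla^2_\Sigma u = -u(\kappa g_\Sigma+\lambda h),\qquad |\nabla_\Sigma u|^2=1-\mu u^2 .
\]

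Using the product splitting $\Sigma=M\times N$, the plan is to restrict this Hessian equation to $M$-slices. Take the trace over the $N$-directions. The term $\operatorname{tr}_{N}\nabla^2_\Sigma u$ integrates to zero over each $N$-slice. The hypothesis on $H_N$ gives the sign $\lambda H_N\le -(n-k)\kappa$ (the inequalities $H_N\le -\frac{n-k}{a}$, $H_N\le 0$, and $H_N\le \frac{n-k}{c}$ are exactly $\kappa(n-k)+\lambda H_N\le 0$). Integrating $u\,\Delta_N u$ over $N$ together with this sign should force $\nabla_N u=0$ and equality $\kappa(n-k)+\lambda H_N=0$ wherever $u\ne0$; in other words, $u$ depends only on $M$. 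In case (1) we additionally use the absence of critical points.

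Then $u$ is a function on $M$ with $|\nabla u|^2=1-\mu u^2$ and $\nabla^2_M u=-u(\kappa g_M+\lambda h|_{TM})$. Taking the Laplacian of $|\nabla u|^2$ gives $\nabla^2_M u(\nabla u,\cdot)=-\mu u\, du$. The key step is a weighted Bochner argument, which we expect to be the main obstacle. Set $v=u$ and use
\[
\tfrac12\Delta_\phi|\nabla v|^2=|\nabla^2 v|^2+\langle\nabla v,\nabla\Delta_\phi v\rangle+\Ric_\phi(\nabla v,\nabla v),
\]
with $|\nabla^2 v|^2\ge\frac{(\Delta v)^2}{k}$ and the standard inequality $\frac{(\Delta v)^2}{k}+\frac{\langle\nabla\phi,\nabla v\rangle^2}{m-k}\ge\frac{(\Delta_\phi v)^2}{m}$. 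Combined with $\Ric_{\phi,m}\ge (m-1)\mu$, this shows that $u$ is an eigenfunction-type extremal: integrating against $e^{-\phi}$ gives $\lambda_1(\Delta_\phi)\ge m\mu$. On the other hand, $|\nabla u|^2=1-\mu u^2$ together with the Hessian identity along gradient lines should give a test function whose Rayleigh quotient is at most $m\mu$; alternatively, we can compute $\Delta_\phi u$ directly. Equality throughout forces $\nabla\phi\cdot\nabla u=0$ with $m=k$ (or $\phi$ constant along the relevant directions) and $\nabla^2 u=-\mu u g_M$. Since $u$ is nonconstant on the closed manifold $M$, Obata's theorem gives $M\cong\mathbb S^k(1/\sqrt\mu)$.

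With $u=f|_\Sigma$ now an Obata function on the round sphere factor, we read off $h|_{TM}$ and $h|_{TN}$ from the equality cases, and hence the boundary geometry. Since $\Sigma$ is connected, the level set $\Sigma_0$ of the warped product is recovered as $\mathbb S^{k-1}\times N$ at the equator, with the warping functions forced. Integrating the Obata flow inward from $\Sigma$ then yields the stated metrics $dr^2+\sin^2 r\,g_{\mathbb S^k}+\frac{a^2+1}{a^2}\cos^2 r\,g_N$ (and the analogues in the other two cases), with the range of $r$ fixed by $f_\nu=\lambda f$. In case (2) the Euclidean ball $B^{k+1}(1/b)$ appears. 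The identification of $\Omega$ from the boundary data relies on the uniqueness of solutions to the ODE along normal geodesics given by the Obata equation.
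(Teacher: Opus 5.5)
There is a genuine gap at exactly the step you flag as the main obstacle: showing that $M$ is round. The weighted Bochner formula with $\Ric^M_{\phi,m}\ge(m-1)\mu g_M$ gives $\lambda_1(-\Delta_\phi)\ge m\mu$ only when applied to a first eigenfunction. Applied to $u=f|_\Sigma$, it says nothing about $u$ being an eigenfunction.

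The reverse bound $\lambda_1\le m\mu$ is only asserted, and nothing you cite supports it. The identities $|\nabla u|^2+\mu u^2=1$ and $\nabla^2u(\nabla u,\cdot)=-\mu u\,du$ only say that $u=\mu^{-1/2}\sin(\sqrt\mu\,F)$ with $|\nabla F|=1$, so the integral curves of $\nabla F$ are unit-speed geodesics. They carry no information about the weighted volume of the level sets of $F$, which is what any Rayleigh quotient depends on.

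Nor can $\Delta_\phi u$ be computed directly. $\Delta_M u=-u(k\kappa+\lambda\operatorname{tr}_{TM}h)$ involves the unknown partial mean curvature in the $M$-directions, and $\langle\nabla\phi,\nabla u\rangle$ is unknown as well. Even if you reached an equality case, Obata's equation would hold for the extremal eigenfunction, not for $u$, so $u$ would still have to be identified separately.

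The missing idea is the distance estimate the paper uses. At the maximum and minimum points $p_\pm$ of $u$ on $M$ one has $\nabla u=0$, so $u(p_\pm)=\pm1/\sqrt\mu$. Integrating $|\nabla F|=1$ along any curve then gives $d_M(p_-,p_+)\ge\pi/\sqrt\mu$. The weighted Myers bound $\diam(M)\le\pi/\sqrt\mu$ forces equality, and the equality case of the weighted maximal diameter theorem gives $M\cong\mathbb S^k(1/\sqrt\mu)$. Equality in the distance estimate along minimizing geodesics from $p_-$ to $p_+$ then yields $u=\mu^{-1/2}\cos(\sqrt\mu\,d_M(\cdot,p_+))$. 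If you want to keep your eigenvalue route, the upper bound would have to come from Cheng-type comparison on the two disjoint balls of radius $\pi/(2\sqrt\mu)$ about $p_\pm$, which needs precisely this distance bound.

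Separately, your global picture is inaccurate. $\Sigma$ is not a level set of $f$; if it were, $u$ would be constant, contradicting what you need on $M$. Also, $\Omega$ is not $[r_0,r_1]\times\Sigma_0$ but a $\mathbb Z_2$-symmetric domain in $\Omega_0\times I_\kappa$ bounded by the graphs $t=\pm\Phi$. The reconstruction must therefore go through the relation $s_\kappa(\Phi(r))=\mu^{-1/2}\cos(\sqrt\mu\,r)$ to recover $g_{\Omega_0}$ first, and then $\Omega$.
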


The point of Theorem \ref{thm:product-rigidity} is that only a Bakry-\'Emery Ricci
curvature lower bound on the factor $M$ and a one-sided bound on
the partial mean curvature in the $N$-directions are required.
The boundary Hessian equation, after taking the trace along each
$N$-fiber and integrating over the closed manifold $N$, forces
$f|_\Sigma$ to be independent of the $N$-variable. Remarkably,
the one-sided partial mean curvature inequality is then
automatically saturated and yields the full model identity
\[
    h|_{TN}=-\frac{\kappa}{\lambda}g_N.
\]
The problem is therefore reduced to the weighted factor $M$, where
the maximal diameter theorem gives the round sphere rigidity.
The ambient manifold is finally recovered from the warped product
structure induced by the Obata equation.

Our second result concerns an intrinsic Bakry-\'Emery curvature condition on
the ambient manifold. 

\begin{theorem}\label{thm:ambient-rigidity}
Let $(\Omega^{n+1},g)$ be a smooth compact connected Riemannian manifold
with smooth boundary $\Sigma$, let $f\in C^\infty(\Omega)$ be nonconstant,
and let $\phi\in C^\infty(\Omega)$ and $m\in[n+1,\infty)$.

\begin{enumerate}
\item
Let $f$ be a solution to  \eqref{eq:three-obata} with $(\kappa,\lambda)=(1, a)$, and suppose
\[
    \Ric^\Omega_{\phi,m}\ge(m-1)g,
    \qquad
    H_\phi\ge(m-1)a,
\]
then $\Omega$ is isometric to the geodesic ball of radius
$\tan^{-1}\frac1a$
in the standard sphere $\mathbb S^{n+1}$.

\item
Let $f$ be a solution to   \eqref{eq:three-obata} with $(\kappa,\lambda)=(0, b)$
and suppose 
\[
    \Ric^\Omega_{\phi,m}\ge0,
    \qquad
    H_\phi\ge(m-1)b,
\]
then $\Omega$ is isometric to the Euclidean ball of radius $1/b$.

\item
Let $f$ be a solution to   \eqref{eq:three-obata} with $(\kappa,\lambda)=(-1, c)$
and suppose
\[
    \Ric^\Omega_{\phi,m}\ge-(m-1)g,
    \qquad
    H_\phi\ge(m-1)c,
\]
then $\Omega$ is isometric to the geodesic ball of radius
$    \tanh^{-1}\frac1c$
in the hyperbolic space $\mathbb H^{n+1}$.
\end{enumerate}
\end{theorem}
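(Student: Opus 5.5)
The plan is to run a weighted Reilly-formula argument for the weighted measure $e^{-\phi}dV_g$, show that every inequality in it is an equality, and then read off the model geometry. We treat the three cases $(\kappa,\lambda)=(1,a),(0,b),(-1,c)$ together, writing $\mu=\lambda^2+\kappa>0$.

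\textbf{Boundary identities.} First restrict \eqref{eq:three-obata} to $\Sigma$. For $X,Y$ tangent to $\Sigma$ we have $\nabla^2 f(X,Y)=\nabla^2_\Sigma f(X,Y)-h(X,Y)f_\nu$. Hence, writing $u=f|_\Sigma$,
\[
\nabla^2_\Sigma u-\lambda u\,h+\kappa u\,g_\Sigma=0 .
\]
Also $0=\nabla^2 f(X,\nu)=X(f_\nu)-h(X,\nabla_\Sigma u)=\lambda X u-h(X,\nabla_\Sigma u)$. So $\nabla_\Sigma u$ is, where nonzero, an eigenvector of $h$ with eigenvalue $\lambda$.

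\textbf{Integral inequality.} Apply the weighted Bochner formula with $\Delta_\phi=\Delta-\langle\nabla\phi,\nabla\cdot\rangle$. The inequality $|\nabla^2f|^2+\Ric_\phi(\nabla f,\nabla f)\ge \frac{(\Delta_\phi f)^2}{m}+\Ric_{\phi,m}(\nabla f,\nabla f)$ holds by Cauchy--Schwarz applied to $\operatorname{tr}\nabla^2 f$ and $\langle\nabla\phi,\nabla f\rangle$. Integrate against $e^{-\phi}$ to obtain the weighted Reilly formula
\[
\int_\Omega\Big[(\Delta_\phi f)^2-|\nabla^2 f|^2-\Ric_\phi(\nabla f,\nabla f)\Big]e^{-\phi}
=\int_\Sigma\Big[2f_\nu\Delta_{\Sigma,\phi}u+H_\phi f_\nu^2+h(\nabla_\Sigma u,\nabla_\Sigma u)\Big]e^{-\phi}.
\]
On the boundary, substitute $f_\nu=\lambda u$, integrate $2\lambda u\Delta_{\Sigma,\phi}u$ by parts to $-2\lambda|\nabla_\Sigma u|^2$, and use $h(\nabla_\Sigma u,\nabla_\Sigma u)=\lambda|\nabla_\Sigma u|^2$. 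In the interior, use $\Ric_{\phi,m}\ge -(m-1)\kappa$-type lower bound from the hypotheses and $H_\phi\ge(m-1)\lambda$. The left side is then at most $\frac{m-1}{m}\int(\Delta_\phi f)^2-(m-1)\kappa\int|\nabla f|^2$ (weighted integrals). The right side is at least $\int_\Sigma[(m-1)\lambda^3u^2-\lambda|\nabla_\Sigma u|^2]$. The term $\int(\Delta_\phi f)^2$ must be rewritten through the weighted Green formula, using $\Delta f=-(n+1)\kappa f$ and the Robin condition. This produces an inequality in the wrong direction unless every step is an equality. Our guide is the model: $f$ is a linear coordinate function in the Euclidean case, and its spherical/hyperbolic analogue otherwise, so the combined inequality should be sharp exactly there.

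\textbf{Equality cases and the main obstacle.} The hard step is the equality analysis, where we must show that $\phi$ is constant. Equality in the Cauchy--Schwarz step gives $\nabla^2 f=\frac{\Delta_\phi f}{m}g$. Comparing with $\nabla^2f=-\kappa fg$ forces
\[
\langle\nabla\phi,\nabla f\rangle=(m-n-1)\kappa f \quad\text{and}\quad \frac{\langle\nabla\phi,\nabla f\rangle}{m-n-1}=\kappa f\ \text{ with matching sign}.
\]
Equality in the Cauchy--Schwarz step also makes $d\phi(\nabla f)$ proportional to $\operatorname{tr}\nabla^2 f$. To exclude a nonconstant $\phi$ we would first try restricting to the integral curves of $\nabla f/|\nabla f|$, which are geodesics by the Obata equation. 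Along them, $\Ric_{\phi,m}(\partial_r,\partial_r)=(m-1)(-\kappa)$-equality combined with $\Ric(\partial_r,\partial_r)=-n\,f'''/f'$ from the warped product structure gives a Riccati-type ODE for $\phi_r$. Its only solution that is compatible with the boundary equality $H_\phi=(m-1)\lambda$ should be $\phi_r\equiv0$. In the Euclidean case $\kappa=0$ the relation $\langle\nabla\phi,\nabla f\rangle=0$ is immediate, and the difficulty is only in the transversal directions. There we would use the equality $H_\phi=(m-1)\lambda$ together with umbilicity to get $\phi_\nu=0$.

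\textbf{Identifying the model.} Once $\phi$ is constant, $m=n+1$ and the boundary equality reads $H=n\lambda$. Combined with the boundary Hessian identity and the eigenvector relation, this shows that $\Sigma$ is umbilic with $h=\lambda g_\Sigma$ and that $u$ solves a boundary Obata equation $\nabla^2_\Sigma u+\mu' u g_\Sigma=0$ with $\mu'=\kappa-\lambda^2$. Obata's theorem on the closed manifold $\Sigma$ then makes it a round sphere. The warped product structure over $\Omega_0$ then identifies $\Omega$ with the geodesic ball of radius $\tan^{-1}\frac1a$, $\frac1b$, or $\tanh^{-1}\frac1c$, respectively, via the unweighted rigidity of \cite{LiuYang,XiaXiong}.
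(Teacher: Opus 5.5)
\textbf{The central integral step cannot work.} The weighted Reilly formula with $f$ itself as the test function carries essentially no information here. It sees the interior curvature only through $\Ric_\phi(\nabla f,\nabla f)$, and the Obata equation already fixes $\Ric(\nabla f,\cdot)=n\kappa\,df$ (trace the commutation formula for $\nabla^3 f$).

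On the boundary, the traced identity $\Delta_\Sigma u=-u(n\kappa+\lambda H)$ removes $H$ completely. After your integration by parts, the boundary side equals $-\int_\Sigma\bigl(\lambda^2\phi_\nu u^2+n\kappa\lambda u^2+\lambda u\langle\nabla_\Sigma\phi,\nabla_\Sigma u\rangle\bigr)e^{-\phi}$, in which $H$ does not appear, so the hypothesis $H_\phi\ge(m-1)\lambda$ has nothing to act on. In the unweighted case both sides of your identity are just $-n\kappa\lambda\int_\Sigma u^2$. So your claim that the chain ``produces an inequality in the wrong direction unless every step is an equality'' is false.

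Here is a concrete example for $\kappa=0$. Take $\Omega=\{(x,t)\in\mathbb H^n\times\mathbb R:\ d(x,x_0)^2+t^2\le b^{-2}\}$ with the product metric, $f=t$, $\phi$ constant and $m=n+1$. Then $\nabla^2 f=0$, $f_\nu=bf$, and $\Ric(\nabla f,\nabla f)=0$. The principal curvatures of $\Sigma$ are $b$ and $b\,r\coth r\ge b$, so $H\ge nb$. Every inequality your argument uses holds, yet $\Omega$ is not a Euclidean ball. What excludes it is $\Ric\ge 0$ in the directions tangent to $\{f=0\}$, which your argument never touches.

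\textbf{The missing idea} is to push the hypotheses down to the totally geodesic slice $\Omega_0=\{f=0\}$, as the paper does.
\begin{itemize}
\item Planes containing $\partial_t$ have sectional curvature $\kappa$, so the Gauss equation gives $\Ric^{\Omega_0}_{\phi,m-1}\ge(m-2)\kappa\, g_{\Omega_0}$.
\item Your eigenvector relation gives $h(\nabla f,\nabla f)=\lambda$ on $\partial\Omega_0$, hence $H_{\phi,\partial\Omega_0}\ge(m-2)\lambda$.
\item The Robin condition, through the graph function $\Phi$ of the warped-product structure, produces a function $v$ on $\Omega_0$ with $|\nabla v|=1$, $v=0$ on $\{\Phi=R\}$ and $v=R$ on $\partial\Omega_0$. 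Hence $\operatorname{InRad}(\Omega_0)\ge R=C_{\kappa,\lambda}$.
\item Sakurai's weighted inscribed-radius rigidity then makes $\Omega_0$ a model ball, and $\Omega$ is reconstructed from the warped product. For $\kappa=1$, the same theorem applied to $\Omega$ first rules out critical points of $f$.
\end{itemize}

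\textbf{The equality analysis has further problems,} even if the integral step were granted.
\begin{itemize}
\item Equality in your Cauchy--Schwarz step forces $\phi_t=(m-n-1)\kappa\,\tau_\kappa(t)$ along the $t$-lines. This nonzero function satisfies the radial equality $\Ric_{\phi,m}(\partial_t,\partial_t)=(m-1)\kappa$, so the Riccati equation alone does not give $\phi_t\equiv0$.
\item $H=n\lambda$ together with a single eigenvector of eigenvalue $\lambda$ does not give umbilicity.
\item Your first boundary identity has the wrong sign. Under the convention in which your eigenvector relation $h(X,\nabla_\Sigma u)=\lambda Xu$ holds, $\nabla^2f(X,Y)=\nabla^2_\Sigma f(X,Y)+h(X,Y)f_\nu$. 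The boundary Obata equation is therefore $\nabla^2_\Sigma u+(\lambda^2+\kappa)u\,g_\Sigma=0$, not one with constant $\kappa-\lambda^2$. Your constant is negative for $\kappa=0,-1$, and then Obata's theorem would force $u\equiv0$.
\end{itemize}
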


The proof of Theorem \ref{thm:ambient-rigidity} has a useful
dimension-reduction feature. The Obata equation first produces a totally
geodesic hypersurface
\[
    \Omega_0=\{f=0\}.
\]
The ambient weighted curvature condition then descends naturally to a
Bakry-\'Emery Ricci lower bound on $\Omega_0$, with the effective dimension
dropping from $m$ to $m-1$. At the same time, the weighted mean curvature
condition on $\Sigma$ induces the corresponding weighted mean curvature
bound on $\partial\Omega_0$. The Robin condition determines a natural
distance-type function on $\Omega_0$, whose level sets connect the maximal
level of the boundary graph function to $\partial\Omega_0$. This produces
the sharp lower bound for the inradius of $\Omega_0$. The weighted maximal
inscribed-radius rigidity theorem then shows that $\Omega_0$ is a geodesic
ball in the corresponding space form. Finally, the original Obata equation
allows one to reconstruct $\Omega$ and yields the desired rigidity.


The paper is organized as follows. In Section~2 we collect several basic facts on Obata-type equation and recall two important results in Bakry-\'Emery Ricci curvature lower bounds: the 
maximal diameter theorem and the maximal inscribed-radius theorem. In Section~3 we prove
Theorem~\ref{thm:product-rigidity}. In Section~4 we prove
Theorem~\ref{thm:ambient-rigidity}.

\section{Preliminaries}\label{sec:preliminaries}

In this section we fix our notation and collect several basic facts on Obata equations that
will be used throughout the paper. We also record several theorem that proofs rely on.

Let $(P^d,g)$ be a $d$-dimensional Riemannian manifold and
$\phi\in C^\infty(P)$. For $m\in[d,\infty)$, the $m$-Bakry-\'Emery
Ricci tensor is defined by
\[
    \Ric_{\phi,m}
    =
    \Ric+\nabla^2\phi
    -\frac{1}{m-d}\,d\phi\otimes d\phi,
\]
where, as usual, the case $m=d$ is allowed only when $\phi$ is constant.
We also write
\[
    \Ric_\phi=\Ric+\nabla^2\phi
\]
for the $\infty$-Bakry-\'Emery Ricci tensor.

If $(P,g)$ has smooth boundary and $\nu$ denotes the outward unit normal,
we use the convention
\[
    h(X,Y)=-\langle\nabla_XY,\nu\rangle
\]
for the second fundamental form and
\[
    H=\operatorname{tr}_g h
\]
for the mean curvature. The weighted mean curvature is
\[
    H_\phi=H-\langle\nabla\phi,\nu\rangle.
\]

For $\kappa\in\{1,0,-1\}$, set
\[
    s_\kappa(t)=
    \begin{cases}
        \sin t, & \kappa=1,\\
        t, & \kappa=0,\\
        \sinh t, & \kappa=-1,
    \end{cases}
\qquad
    c_\kappa(t)=s_\kappa'(t)=
    \begin{cases}
        \cos t, & \kappa=1,\\
        1, & \kappa=0,\\
        \cosh t, & \kappa=-1.
    \end{cases}
\]
Thus
\[
    s_\kappa''+\kappa s_\kappa=0,
    \qquad
    c_\kappa''+\kappa c_\kappa=0.
\]

The three equations considered in this paper can then be written uniformly
as
\begin{equation}\label{eq:unified-obata}
    \nabla^2f+\kappa f g=0
    \qquad\text{in }\Omega,
\end{equation}
together with
\begin{equation}\label{eq:unified-robin}
    f_\nu=\lambda f
    \qquad\text{on }\Sigma,
\end{equation}
where
\[
    (\kappa,\lambda)
    =
    (1,a),\qquad
    (0,b),\qquad
    (-1,c),
\]
respectively. Notice that in all three cases
\[
    \mu:=\lambda^2+\kappa>0.
\]

We begin with some elementary consequences of
\eqref{eq:unified-obata}--\eqref{eq:unified-robin}.

\begin{lemma}\label{lem:basic-identities}
Let $f$ be a nonconstant solution of
\eqref{eq:unified-obata}. Then
\[
    |\nabla f|^2+\kappa f^2
\]
is constant on $\Omega$. After multiplying $f$ by a suitable positive
constant, we may normalize
\begin{equation}\label{eq:normalization}
    |\nabla f|^2+\kappa f^2=1.
\end{equation}
If in addition $f$ satisfies \eqref{eq:unified-robin}, then on $\Sigma$
we have
\begin{equation}\label{eq:boundary-energy}
    |\nabla_\Sigma f|^2+\mu f^2=1,
    \qquad
    \mu=\lambda^2+\kappa,
\end{equation}
and
\begin{equation}\label{eq:boundary-hessian}
    \nabla_\Sigma^2f
    =
    -f\bigl(\kappa g_\Sigma+\lambda h\bigr).
\end{equation}
Moreover, the zero set
\[
    \Omega_0=\{x\in\Omega:f(x)=0\}
\]
is a smooth totally geodesic hypersurface whenever it is nonempty.
\end{lemma}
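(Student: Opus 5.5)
The argument is a sequence of direct computations from \eqref{eq:unified-obata}, taken in the order of the statement.

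\emph{Conserved quantity.} Differentiate $|\nabla f|^2+\kappa f^2$. Using \eqref{eq:unified-obata},
\[
\nabla\bigl(|\nabla f|^2+\kappa f^2\bigr)=2\nabla^2f(\nabla f,\cdot)+2\kappa f\,df=-2\kappa f\,df+2\kappa f\,df=0 .
\]
Since $\Omega$ is connected, the quantity is constant; call it $C$. We need $C>0$.
\begin{itemize}
\item If $\kappa\le 0$, then $C\ge|\nabla f|^2\ge 0$. If $C=0$ then $\nabla f\equiv 0$, contradicting that $f$ is nonconstant.
\item If $\kappa=1$ and $C=0$, then $f\equiv 0$, again a contradiction.
\end{itemize}
So $C>0$. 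Replacing $f$ by $f/\sqrt C$ keeps both \eqref{eq:unified-obata} and \eqref{eq:unified-robin}, since both are linear, and gives \eqref{eq:normalization}.

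\emph{Boundary identities.} On $\Sigma$ split $\nabla f=\nabla_\Sigma f+f_\nu\nu$. Then $|\nabla f|^2=|\nabla_\Sigma f|^2+\lambda^2f^2$, and substituting into \eqref{eq:normalization} gives \eqref{eq:boundary-energy}. For the Hessian, take $X,Y$ tangent to $\Sigma$ and use the Gauss formula with the convention $h(X,Y)=-\langle\nabla_XY,\nu\rangle$, i.e.\ $\nabla_XY=\nabla^\Sigma_XY-h(X,Y)\nu$. This gives
\[
\nabla^2f(X,Y)=\nabla_\Sigma^2f(X,Y)-h(X,Y)f_\nu .
\]
Hence $\nabla_\Sigma^2f=\nabla^2f|_{T\Sigma}+f_\nu h=-\kappa f g_\Sigma+\lambda f h$.

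The sign in front of $h$ is where care is needed. With this convention the computation produces $+\lambda f h$, while the statement has $-\lambda f h$. The model case should settle it. Take a spherical cap, with $f$ a restricted linear function and $h$ positive for the outward normal. The identity $\nabla_\Sigma^2 f=-f(\kappa+\lambda\cdot\text{principal curvature})g$ is the correct one there, since the umbilic spherical boundary has effective curvature $\mu=\kappa+\lambda^2$ when $h=\lambda g$. This indicates that the convention in force effectively gives $\nabla_XY=\nabla^\Sigma_XY+h(X,Y)\nu$. I would recheck the convention against that model and present the formula so it matches the statement; that is, track the sign so that $\nabla^2 f(X,Y)=\nabla^2_\Sigma f(X,Y)+h(X,Y)f_\nu$. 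This sign bookkeeping is the only delicate step.

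\emph{Zero set.} On $\{f=0\}$, \eqref{eq:normalization} gives $|\nabla f|^2=1$. So $0$ is a regular value and $\Omega_0$ is a smooth hypersurface. Its unit normal is $N=\nabla f$. For $X,Y$ tangent to $\Omega_0$, the second fundamental form is
\[
\langle\nabla_XY,N\rangle=-\nabla^2f(X,Y)=\kappa f\langle X,Y\rangle=0 .
\]
So $\Omega_0$ is totally geodesic. Near $\Sigma$ we need $\Omega_0$ to be a smooth hypersurface with boundary. I expect a transversality check: at points of $\Omega_0\cap\Sigma$ we have $f_\nu=\lambda f=0$, so $\nabla f$ is tangent to $\Sigma$ and $\Omega_0$ meets $\Sigma$ orthogonally, in particular transversally.
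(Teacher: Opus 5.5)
\textbf{There are two genuine errors: a sign error in the boundary Hessian step, and a false positivity argument for $\kappa=-1$.}

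\textbf{Boundary Hessian.} Your Gauss-formula step contains a sign error, and you have misdiagnosed it. The stated convention $h(X,Y)=-\langle\nabla_XY,\nu\rangle$ does give $\nabla_XY=\nabla^\Sigma_XY-h(X,Y)\nu$. Then $\nabla^2f(X,Y)=X(Yf)-(\nabla_XY)f$ yields
\[
\nabla^2f(X,Y)=X(Yf)-(\nabla^\Sigma_XY)f+h(X,Y)\,\nu(f)=\nabla^2_\Sigma f(X,Y)+h(X,Y)f_\nu .
\]
This is exactly the identity the paper uses. Inserting $\nabla^2f=-\kappa fg$ and $f_\nu=\lambda f$ gives $\nabla_\Sigma^2f=-f(\kappa g_\Sigma+\lambda h)$ at once.

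There is no convention problem. Your claim that the convention ``effectively'' gives $\nabla_XY=\nabla^\Sigma_XY+h(X,Y)\nu$ is false. That convention would make $h=-\lambda g_\Sigma$ on the model ball, which contradicts the positivity of $h$ assumed in your own model check. It also contradicts the identity $h(e_1,e_1)=\lambda$ used later in Lemma~\ref{lem:dimension-reduction}. Your final formula comes out right only because this convention flip cancels your arithmetic error. The spherical-cap computation is a good sanity check, but choosing signs so the output matches the statement is not a derivation. As written, the one step you flag as delicate is not proved.

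\textbf{Positivity of the normalizing constant.} Your argument that $C>0$ fails for $\kappa=-1$. There $C=|\nabla f|^2-f^2\le|\nabla f|^2$, not $\ge$. In fact \eqref{eq:unified-obata} alone does not force $C>0$. Take a geodesic ball of $\mathbb H^{n+1}$ with $r$ the distance to its center, and $f=\cosh r$. Then $\nabla^2f-fg=0$, but $C=\sinh^2r-\cosh^2r=-1$. The paper's proof asserts the normalization without addressing this, so for $\kappa=-1$ the Robin condition is genuinely needed.

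The repair goes as follows. On $\Sigma$ one has $C=|\nabla_\Sigma f|^2+\mu f^2\ge0$, because $\mu>0$. If $C=0$, then $f=0$ and $\nabla f=0$ at every boundary point. The pair $(f,\nabla f)$ solves the linear first-order system $df=\langle\nabla f,\cdot\rangle$, $\nabla(\nabla f)=-\kappa f\,\mathrm{Id}$. Hence it vanishes on all of the connected $\Omega$, contradicting that $f$ is nonconstant.

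With these two corrections, the rest of your argument is the same as the paper's: the conserved quantity, the boundary energy identity, and the totally geodesic zero set via $|\nabla f|=1$ on $\{f=0\}$.
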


\begin{proof}
For any vector field $X$,
\[
\begin{aligned}
    X\bigl(|\nabla f|^2+\kappa f^2\bigr)
    &=
    2\nabla^2f(X,\nabla f)+2\kappa f\,X(f)\\
    &=0,
\end{aligned}
\]
which proves the first assertion.

On $\Sigma$,
\[
    |\nabla f|^2
    =
    |\nabla_\Sigma f|^2+f_\nu^2,
\]
and hence \eqref{eq:boundary-energy} follows immediately from
\eqref{eq:normalization} and $f_\nu=\lambda f$.

For $X,Y\in T\Sigma$, our convention for the second fundamental form gives
\[
    \nabla^2f(X,Y)
    =
    \nabla_\Sigma^2f(X,Y)+h(X,Y)f_\nu.
\]
Combining this identity with
\eqref{eq:unified-obata} and \eqref{eq:unified-robin} yields
\eqref{eq:boundary-hessian}.

Finally, on $\Omega_0$ we have $|\nabla f|=1$ by
\eqref{eq:normalization}, so $\Omega_0$ is a regular level set. Moreover,
\[
    \nabla^2f=0
    \qquad\text{on }\Omega_0.
\]
The second fundamental form of a regular level set is a multiple of the
restriction of $\nabla^2f$ to its tangent bundle, and hence $\Omega_0$ is
totally geodesic.
\end{proof}

A basic fact underlying our arguments is that the Obata-type equation
determines a warped product structure. We recall here the global form that
will be used later; see
\cite{ChenLaiWang,LiuYang,XiaXiong}.

Set
\[
    I_\kappa=
    \begin{cases}
        (-\frac{\pi}{2},\frac{\pi}{2}), & \kappa=1,\\
        \mathbb R, & \kappa=0,-1.
    \end{cases}
\]

\begin{theorem}\label{thm:warped-structure}
Let $(\Omega^{n+1},g)$ be a smooth compact connected Riemannian manifold
with smooth connected boundary $\Sigma$, and let $f$ be a nonconstant
solution of
\eqref{eq:unified-obata}--\eqref{eq:unified-robin}. In the case
$\kappa=1$, assume in addition that $f$ has no critical points.

After the normalization \eqref{eq:normalization}, $\Omega$ is isometric
to a $\mathbb Z_2$-symmetric domain in
\[
    \Omega_0\times I_\kappa,
    \qquad
    g=dt^2+c_\kappa^2(t)g_{\Omega_0},
\]
and
\[
    f=s_\kappa(t).
\]
More precisely, the boundary is given by the two graphs
\[
    t=\pm\Phi(x),
    \qquad x\in\Omega_0,
\]
where
\[
    \Phi\in C^\infty(\mathring{\Omega}_0)\cap C(\Omega_0),
    \qquad
    \Phi>0\ \text{in }\mathring{\Omega}_0,
    \qquad
    \Phi=0\ \text{on }\partial\Omega_0,
\]
and $\Phi$ satisfies
\begin{equation}\notag \label{eq:graph-equation}
    \frac{c_\kappa(\Phi)}
    {\sqrt{
        1+c_\kappa^{-2}(\Phi)
        |\nabla_{\Omega_0}\Phi|^2
    }}
    =
    \lambda s_\kappa(\Phi).
\end{equation}
\end{theorem}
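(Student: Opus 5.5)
The plan is to build the warped product from the flow of $\nabla f$ starting at $\Omega_0$, and then to read off the boundary as a pair of graphs.

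\textbf{Step 1: $\Omega_0$ is a nonempty compact hypersurface with boundary.} By Lemma~\ref{lem:basic-identities} we normalize $|\nabla f|^2+\kappa f^2=1$, so $\Omega_0$, when nonempty, is a smooth totally geodesic hypersurface. For nonemptiness, integrate: from $\nabla^2 f+\kappa fg=0$ and the Robin condition,
\[
\int_\Omega \Delta f=\lambda\int_\Sigma f .
\]
\begin{itemize}
\item If $\kappa=0$, then $\Delta f=0$, so $\int_\Sigma f=0$ and $f$ changes sign.
\item If $\kappa\neq0$, we get $-(n+1)\kappa\int_\Omega f=\lambda\int_\Sigma f$. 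If $f>0$ everywhere, then for $\kappa=-1$ this is still consistent, so we need another argument: evaluate at a maximum point of $f$ on $\Omega$. An interior maximum is impossible when $\kappa\le0$ and $f>0$ (the Hessian $-\kappa fg$ is $\ge0$ and not strictly negative; for $\kappa=-1$ it is positive definite, a contradiction). A boundary maximum forces $f_\nu\ge0$, which is consistent. The cleaner route is to restrict $f$ to a geodesic along $\nabla f/|\nabla f|$, which is a gradient line since $\nabla^2 f\propto g$. Along it $f''=-\kappa f$, and the line exits through $\Sigma$ in both directions. At the forward exit point the outward normal satisfies $f_\nu\le |\nabla f|$, and at the backward exit point $f_\nu\ge-|\nabla f|$.
\end{itemize}
Combining these with $f_\nu=\lambda f$ and $\mu>0$, I expect to show that $f$ takes both signs. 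Concretely, the minimum of $f$ on $\Omega$ is attained on $\Sigma$ (for $\kappa=1$ there are no critical points by assumption), where $f_\nu\le0$, giving $\lambda f\le0$. Symmetrically the maximum gives $f\ge0$. Equality cases contradict $|\nabla f|\ne0$ at a point where $f_\nu=0$ is the full gradient, together with the energy identity. Hence $\Omega_0\neq\emptyset$. Since $\Omega_0$ meets $\Sigma$ transversally (at $f=0$ on $\Sigma$ we have $f_\nu=0$, so $\nabla f$ is tangent to $\Sigma$), $\Omega_0$ is a compact manifold with boundary $\partial\Omega_0=\Omega_0\cap\Sigma$.

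\textbf{Step 2: coordinates.} Let $X=\nabla f/|\nabla f|^2$ near $\Omega_0$, or better, use the unit field $\nabla f/|\nabla f|$. Its integral curves are unit-speed geodesics because $\nabla_{\nabla f}\nabla f=-\kappa f\nabla f$. Define
\[
F(x,t)=\exp_x\!\bigl(t\,\nu_0(x)\bigr),\qquad x\in\Omega_0,
\]
where $\nu_0=\nabla f$ on $\Omega_0$. Along each such geodesic $f''=-\kappa f$ with $f(0)=0$ and $f'(0)=1$, so $f=s_\kappa(t)$. The Jacobi fields in directions tangent to $\Omega_0$ satisfy $J''=-\kappa J$ with $J'(0)=0$ (total geodesicity), which yields $g=dt^2+c_\kappa^2(t)g_{\Omega_0}$. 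For $\kappa=1$ we need $|t|<\pi/2$, which holds since $c_\kappa=|\nabla f|\neq0$ on $\Omega$.

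\textbf{Step 3: $F$ is a diffeomorphism onto $\Omega$.} Every point $p\in\Omega$ lies on a unique maximal gradient curve. Following this curve backward or forward, $f$ is monotone (since $f'=c_\kappa>0$), and the curve reaches $f=0$ before exiting, provided it does not hit $\Sigma$ first. This is the main obstacle. I would handle it with the Robin condition: along a gradient line, at an exit point $q\in\Sigma$ the line leaves $\Omega$, so $\langle\nabla f,\nu\rangle\ge0$ when moving forward. This gives $\lambda f(q)\ge0$, that is, $f(q)\ge0$. Therefore moving toward decreasing $|f|$ (toward $f=0$) the curve cannot exit before crossing $\Omega_0$, except when it is tangent with $f=0$, i.e.\ already on $\partial\Omega_0$. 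Hence every point flows to $\Omega_0$, and $F$ is surjective; injectivity follows from uniqueness of the flow lines.

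\textbf{Step 4: the boundary as graphs.} For each $x\in\mathring\Omega_0$, the line $t\mapsto F(x,t)$ exits exactly once in each direction, at $t=\pm\Phi(x)$. The symmetry $\Phi_+=\Phi_-$ comes from the graph equation. The Robin condition on $t=\Phi_\pm(x)$, written with the unit normal
\[
\nu=\frac{\partial_t-c_\kappa^{-2}\nabla\Phi}{\sqrt{1+c_\kappa^{-2}|\nabla\Phi|^2}}
\]
and $f_\nu=c_\kappa(\Phi)\langle\partial_t,\nu\rangle=\lambda s_\kappa(\Phi)$, gives the stated first-order equation, which is even in $t$. Connectedness of $\Sigma$ and the condition $\Phi=0$ on $\partial\Omega_0$ then make the two graphs mirror images, which gives the $\mathbb Z_2$-symmetry. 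Smoothness of $\Phi$ in the interior follows from transversality, since $\langle\partial_t,\nu\rangle=\lambda s_\kappa/c_\kappa\neq0$ when $\Phi\neq0$.
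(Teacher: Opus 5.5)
The paper does not prove this theorem; it quotes it from \cite{ChenLaiWang,LiuYang,XiaXiong}, so I am judging your argument on its own. Most of it is the standard construction and works: the normal geodesics from the totally geodesic $\Omega_0$, $f=s_\kappa(t)$ along them, the warped metric, and the Robin sign argument. That argument says a gradient line can leave $\Omega$ forward only where $f\ge 0$ and backward only where $f\le 0$. For $J''=-\kappa J$ you should record that $R(J,\partial_t)\partial_t=\kappa J$; this follows from commuting derivatives in $\nabla_X\nabla f=-\kappa fX$.

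\textbf{Main gap: the $\mathbb Z_2$-symmetry is asserted, not proved.} Write the lower graph as $t=-\Phi_-(x)$. You correctly note that $\Phi_+$ and $\Phi_-$ satisfy the same first-order equation. You then claim that this, together with $\Phi_\pm=0$ on $\partial\Omega_0$ and connectedness of $\Sigma$, makes them equal. That does not follow. It is a global uniqueness statement for a fully nonlinear, essentially eikonal, equation, and boundary agreement alone does not force interior agreement; think of Lipschitz solutions of $|\nabla v|=1$. Connectedness of $\Sigma$ plays no role.

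One way to close this:
\begin{itemize}
\item The equation gives $\lambda s_\kappa(\Phi)\le c_\kappa(\Phi)$, so $0\le\Phi\le R$ with $R=C_{\kappa,\lambda}$.
\item Set $\tau_\kappa(v)=\lambda^{-1}\sqrt{1-\mu s_\kappa^2(\Phi)}$. A direct computation turns the graph equation into $|\nabla v|=1$ on $\{0<v<R\}$, and $\{v=R\}=\{\Phi=0\}=\partial\Omega_0$.
\item Since $v$ is continuous and $1$-Lipschitz, $R-v\le d(\cdot,\partial\Omega_0)$.
\item From a point with $0<v<R$, follow the integral curve of $\nabla v$. Along it $v$ increases at unit rate, so the curve reaches $\partial\Omega_0$ after length $R-v$. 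This gives the reverse inequality.
\end{itemize}
Hence $R-v=\min\{d(\cdot,\partial\Omega_0),R\}$ for either graph, which yields $\Phi_+=\Phi_-$.

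\textbf{Second gap: behaviour of $\Phi$ at $\partial\Omega_0$.} The statement requires $\Phi=0$ on $\partial\Omega_0$ and $\Phi\in C(\Omega_0)$, and the argument above uses continuity of $v$ up to $\partial\Omega_0$. You assume both. Your exit-sign argument only shows that tangency with $\Sigma$ happens on $\partial\Omega_0$. It does not rule out two things:
\begin{itemize}
\item a backward gradient line from an interior point landing on $\partial\Omega_0$;
\item the normal geodesic from some $x_0\in\partial\Omega_0$ staying in $\Omega$ for positive time.
\end{itemize}
What is missing is convexity of $\Sigma$ in the $\nabla f$-direction along $\partial\Omega_0$. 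Differentiate $f_\nu=\lambda f$ along $\nabla f$ at $x_0$, using $\nabla^2f=0$ there. This gives $h(\nabla f,\nabla f)=\lambda>0$, as in the proof of Lemma~\ref{lem:dimension-reduction}. Hence the geodesic through $x_0$ with velocity $\nabla f$ leaves $\Omega$ immediately in both directions, so $\Phi_\pm(x_0)=0$.

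For continuity, suppose $x_k\to x_0$ with $\Phi_\pm(x_k)\ge\delta>0$. The segments $F(x_k,[0,\delta])\subset\Omega$ converge to a geodesic segment from $x_0$ lying in $\Omega$, contradicting that immediate exit.
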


Thus the Obata-type equation already determines the geometry in the
direction normal to $\Omega_0$. The remaining rigidity problem is to
determine the intrinsic geometry of $\Omega_0$ and the graph function
$\Phi$.

The following simple observation will be repeatedly used in conjunction
with the weighted maximal diameter theorem.

\begin{lemma}\label{lem:distance}
Let $(P,g)$ be a connected Riemannian manifold and let $u\in C^\infty(P)$
satisfy
\begin{equation}\label{eq:distance-energy}
    |\nabla u|^2+\mu u^2=1
\end{equation}
for some constant $\mu>0$. Suppose that there exist $p_-,p_+\in P$ such
that
\[
    u(p_-)
    =
    -\frac1{\sqrt\mu},
    \qquad
    u(p_+)
    =
    \frac1{\sqrt\mu}.
\]
Then $  d(p_-,p_+)\ge\frac{\pi}{\sqrt\mu}.$

If equality holds and $\gamma:[0,\pi/\sqrt\mu]\to P$ is a minimizing
unit-speed geodesic joining $p_-$ to $p_+$, then
\begin{equation}\label{eq:u-along-geodesic}
    u(\gamma(t))
    =
    -\frac1{\sqrt\mu}
    \cos(\sqrt\mu\,t),
\end{equation}
and, for $0<t<\pi/\sqrt\mu$,
\[
    \nabla u(\gamma(t))
    =
    |\nabla u|(\gamma(t))\,\gamma'(t).
\]
\end{lemma}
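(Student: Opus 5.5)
The plan is to reduce to a one–dimensional comparison along a minimizing geodesic. Set $L=d(p_-,p_+)$ and let $\gamma:[0,L]\to P$ be a unit-speed curve from $p_-$ to $p_+$; if $P$ is not complete, replace $\gamma$ by an almost-minimizing piecewise smooth curve of length at most $L+\varepsilon$. Put $v(t)=u(\gamma(t))$. By \eqref{eq:distance-energy} and Cauchy--Schwarz,
\[
    |v'(t)|=|\langle\nabla u,\gamma'\rangle|\le|\nabla u|=\sqrt{1-\mu v^2}.
\]
In particular $|v|\le 1/\sqrt\mu$ everywhere, so $\theta(t)=\arcsin(\sqrt\mu\,v(t))\in[-\pi/2,\pi/2]$ is well defined.

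Wherever $|v|<1/\sqrt\mu$, the function $\theta$ is differentiable with
\[
    |\theta'|=\frac{\sqrt\mu\,|v'|}{\sqrt{1-\mu v^2}}\le\sqrt\mu .
\]
At points where $|v|=1/\sqrt\mu$, I will avoid differentiating $\theta$ and instead use the fact that $\theta$ is the composition of the Lipschitz map $t\mapsto\sqrt\mu\,v$ with $\arcsin$. More cleanly, $\arcsin$ is absolutely continuous on $[-1,1]$ and monotone, so $\theta$ is absolutely continuous. The set of $t$ where $|v|=1/\sqrt\mu$ and $\theta$ is differentiable has $\theta'=0$ almost everywhere, because $\theta$ attains an extremum there. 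Hence $|\theta'|\le\sqrt\mu$ a.e., and
\[
    \pi=\theta(L)-\theta(0)\le\sqrt\mu\,L,
\]
which gives $L\ge\pi/\sqrt\mu$. In the approximate-curve version this yields $L+\varepsilon\ge\pi/\sqrt\mu$ for every $\varepsilon>0$.

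For the equality case, take $\gamma$ minimizing with $L=\pi/\sqrt\mu$. The integral inequality $\int_0^L\theta'\,dt=\pi=\int_0^L\sqrt\mu\,dt$ together with $\theta'\le\sqrt\mu$ a.e. forces $\theta'=\sqrt\mu$ a.e. Since $\theta(0)=-\pi/2$, we get $\theta(t)=\sqrt\mu\,t-\pi/2$, so
\[
    v(t)=\frac1{\sqrt\mu}\sin\!\Bigl(\sqrt\mu\,t-\frac\pi2\Bigr)=-\frac1{\sqrt\mu}\cos(\sqrt\mu\,t),
\]
which is \eqref{eq:u-along-geodesic}. Then for $0<t<L$ we have $|v|<1/\sqrt\mu$, so $v'=\sqrt{1-\mu v^2}=|\nabla u|>0$. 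Equality in Cauchy--Schwarz, $\langle\nabla u,\gamma'\rangle=|\nabla u|$ with $|\gamma'|=1$, gives $\nabla u=|\nabla u|\gamma'$.

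The only delicate point is the regularity of $\theta$ at the extreme values $\pm1/\sqrt\mu$, where $\arcsin$ is not Lipschitz. I expect this to be handled by the absolute-continuity argument above. An alternative is to perturb the endpoints: restrict to $[t_0,t_1]$ where $|v|<1/\sqrt\mu$ in the interior, with $t_0$ the last time $v=-1/\sqrt\mu$ and $t_1$ the first subsequent time $v=1/\sqrt\mu$. There $\theta$ is smooth in the interior and continuous up to the ends, and the same estimate applied on $[t_0,t_1]\subset[0,L]$ gives the bound directly. In the equality case this also forces $t_0=0$ and $t_1=L$.
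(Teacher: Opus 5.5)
Your proposal is correct and is essentially the paper's argument: the paper uses $F=\frac1{\sqrt\mu}\arcsin(\sqrt\mu\,u)$ with $|\nabla F|=1$, integrates along curves, and reads off $\nabla F=\gamma'$ in the equality case, which is exactly your $\theta=\sqrt\mu\,F\circ\gamma$ argument. Your restriction to $[t_0,t_1]$ handles intermediate points where $|u|=1/\sqrt\mu$, which the paper passes over. One caveat: your stated reason for the absolute continuity of $\theta$ (composing with the monotone, absolutely continuous $\arcsin$) is not valid in general, since such compositions with a Lipschitz inner function can fail to be of bounded variation. What actually makes $\theta$ $\sqrt\mu$-Lipschitz here is the inequality $|v'|\le\sqrt{1-\mu v^2}$, so the $[t_0,t_1]$ argument is the one to keep.
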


\begin{proof}
On the set where $|u|<1/\sqrt\mu$, consider
\[
    F
    =
    \frac1{\sqrt\mu}
    \arcsin(\sqrt\mu\,u).
\]
By \eqref{eq:distance-energy},
\[
    |\nabla F|
    =
    \frac{|\nabla u|}
    {\sqrt{1-\mu u^2}}
    =1.
\]
Integrating along any curve from $p_-$ to $p_+$, and taking limits at the
two endpoints, gives
\[
    d(p_-,p_+)
    \ge
    F(p_+)-F(p_-)
    =
    \frac{\pi}{\sqrt\mu}.
\]
If equality holds, equality must hold in the above estimate along every
minimizing geodesic from $p_-$ to $p_+$. Hence
$\nabla F=\gamma'$ along $\gamma$, which gives
\eqref{eq:u-along-geodesic} and the last assertion.
\end{proof}

We next recall two rigidity theorems for manifolds satisfying lower
Bakry-\'Emery Ricci curvature bounds.

The first is the weighted version of Cheng's maximal diameter theorem;
see \cite{MR3028781,Qian,Ruan}.

\begin{theorem}
\label{thm:weighted-diameter}
Let $(P^d,g)$ be a closed Riemannian manifold, let
$\phi\in C^\infty(P)$, and let $m\in[d,\infty)$. Assume that
\[
    \Ric_{\phi,m}
    \ge
    (m-1)K g
\]
for some $K>0$. Then
\[
    \diam(P)\le\frac{\pi}{\sqrt K}.
\]
If equality holds, then
\[
    (P,g)
    \cong
    \mathbb S^d\left(\frac1{\sqrt K}\right).
\]
Moreover, in the equality case
\[
    m=d
\]
and $\phi$ is constant.
\end{theorem}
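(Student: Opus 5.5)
The plan is to follow the Bakry--Qian/Wei--Wylie strategy: a weighted Laplacian comparison for the distance function along minimizing geodesics, followed by a Myers-type index argument in dimension $m$. For the diameter bound, I fix $p\in P$ and a unit-speed minimizing geodesic $\gamma$ from $p$. Along $\gamma$, set $\Delta_\phi r=\Delta r-\langle\nabla\phi,\nabla r\rangle$ for $r=d(p,\cdot)$. The Bochner formula
\[
\tfrac12\Delta_\phi|\nabla r|^2=|\nabla^2 r|^2+\langle\nabla r,\nabla\Delta_\phi r\rangle+\Ric_\phi(\nabla r,\nabla r)
\]
combined with $|\nabla^2r|^2\ge(\Delta r)^2/(d-1)$ and the elementary inequality
\[
\frac{(\Delta r)^2}{d-1}+\frac{(\phi')^2}{m-d}\ge\frac{(\Delta_\phi r)^2}{m-1}
\]
gives the Riccati inequality
\[
\psi'+\frac{\psi^2}{m-1}\le-(m-1)K,\qquad \psi=\Delta_\phi r .
\]
Comparing with $(m-1)\sqrt K\cot(\sqrt K r)$ forces $\psi\to-\infty$ before $r=\pi/\sqrt K$. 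So no minimizing geodesic has length exceeding $\pi/\sqrt K$, hence $\diam(P)\le\pi/\sqrt K$. Alternatively, I could run the second-variation argument using the variations $\sin(\sqrt K t)E_i$, with a weighted correction term handling $\phi$.

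For rigidity, I pick $p,q$ with $d(p,q)=\pi/\sqrt K$ and set $r_p=d(p,\cdot)$, $r_q=d(q,\cdot)$. The triangle inequality gives $r_p+r_q\ge\pi/\sqrt K$. The comparison yields, in the barrier or distribution sense,
\[
\Delta_\phi r_p\le(m-1)\sqrt K\cot(\sqrt K r_p),
\]
and the analogous bound for $r_q$. Adding them, the right-hand sides sum to a nonpositive quantity wherever $r_p+r_q\ge\pi/\sqrt K$. So $r_p+r_q$ is $\Delta_\phi$-superharmonic and attains its minimum $\pi/\sqrt K$ on the interior of $\gamma$. The strong maximum principle (Calabi's barrier version for the drift Laplacian) then gives $r_p+r_q\equiv\pi/\sqrt K$. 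Consequently all the inequalities above are equalities along every geodesic from $p$:
\begin{itemize}
\item $\nabla^2 r_p=\sqrt K\cot(\sqrt K r_p)\,(g-dr_p\otimes dr_p)$,
\item $\Delta r/(d-1)=\phi'/(m-d)$ up to sign consistent with the equality case of Cauchy--Schwarz,
\item $\Ric_{\phi,m}(\partial_r,\partial_r)=(m-1)K$.
\end{itemize}

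The main obstacle is extracting $m=d$ and $\phi$ constant from these equalities. The Hessian identity already gives $\Delta r=(d-1)\sqrt K\cot(\sqrt K r)$. Equality in the merged Riccati inequality forces $\Delta_\phi r=(m-1)\sqrt K\cot(\sqrt K r)$, so
\[
-\partial_r\phi=(m-d)\sqrt K\cot(\sqrt K r).
\]
Integrating, I get $e^{-\phi}\propto\sin^{m-d}(\sqrt K r)$ along each geodesic from $p$. If $m>d$, then $\phi\to+\infty$ as $r\to0$, contradicting smoothness of $\phi$ at $p$. Hence $m=d$, and $\phi$ is constant by the stated convention. With $m=d$, the Hessian identity $\nabla^2 r_p=\sqrt K\cot(\sqrt K r_p)(g-dr_p^2)$ shows the metric is $dr^2+K^{-1}\sin^2(\sqrt K r)g_{\mathbb S^{d-1}}$ in polar coordinates at $p$, valid up to $r=\pi/\sqrt K$ where $q$ closes it up. Therefore $P\cong\mathbb S^d(1/\sqrt K)$.
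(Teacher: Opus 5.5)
The paper gives no proof of this statement. It imports it from \cite{MR3028781,Qian,Ruan} and uses it as a black box in Section~3, so there is no internal argument to match. Your proof is the standard self-contained route: a Bakry--Qian Riccati comparison for $\psi=\Delta_\phi r$ in effective dimension $m$, then the Calabi/Eschenburg--Heintze maximum-principle proof of Cheng's theorem applied to $r_p+r_q$. It is correct in outline. What it adds over the bare citation is a clear reason for the extra conclusion: in the equality case the drift is forced to be $-\partial_r\phi=(m-d)\sqrt K\cot(\sqrt K r)$, which is unbounded near $p$ unless $m=d$.

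A few steps need tightening.

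\begin{itemize}
\item \textbf{From $r_p+r_q\equiv\pi/\sqrt K$ to equalities.} Say explicitly that every $x\neq p,q$ is an interior point of a minimizing geodesic from $p$ to $q$, hence not a cut point of $p$. Three things follow:
\begin{itemize}
\item $r_p$ is smooth on $P\setminus\{p,q\}$;
\item the cut locus of $p$ is $\{q\}$, which your polar-coordinate reconstruction needs;
\item $\Delta_\phi r_q=-\Delta_\phi r_p$, which together with the two one-sided comparisons gives $\psi=(m-1)\sqrt K\cot(\sqrt K r_p)$. This saturates the Riccati inequality, and hence every inequality in the chain.
\end{itemize}
\item \textbf{The Hessian coefficient.} Equality in $|\nabla^2 r|^2\ge(\Delta r)^2/(d-1)$ alone only gives umbilicity, $\nabla^2 r=\frac{\Delta r}{d-1}(g-dr\otimes dr)$. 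The coefficient $\sqrt K\cot(\sqrt K r)$ comes from the Cauchy--Schwarz equality $\frac{\Delta r}{d-1}=\frac{-\partial_r\phi}{m-d}=\frac{\psi}{m-1}$ combined with $\psi$ equal to the model. So your sentence ``the Hessian identity already gives $\Delta r=(d-1)\sqrt K\cot(\sqrt K r)$'' is missing this input. Once you use it, the drift formula is immediate, and the contradiction for $m>d$ is simply that $\partial_r\phi$ is bounded near $p$.
\item \textbf{The case $m=d$.} Split it off in the elementary inequality. There $\phi$ is constant and the term $(\partial_r\phi)^2/(m-d)$ is absent rather than $0/0$.
\item \textbf{Dimension.} You tacitly use $d\ge 2$, through the factor $1/(d-1)$ and the connectedness of $P\setminus\{p,q\}$. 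This restriction is genuinely needed: for $d=m=1$ the hypothesis reads $0\ge 0$, and long circles violate the diameter bound. The paper only applies the theorem with $d=k\ge2$, so this is harmless there.
\end{itemize}
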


We also recall the weighted maximal inscribed-radius theorem of Sakurai
\cite{Sakurai}. Let $(P^d,g)$ be a compact Riemannian manifold with smooth
boundary. Its inscribed radius is
\[
    \operatorname{InRad}(P)
    =
    \sup_{x\in P}d(x,\partial P).
\]

Let $\kappa,\lambda\in\mathbb R$. We say that $(\kappa,\lambda)$ satisfies
the ball condition if there exists a geodesic ball in the simply connected
space form of constant sectional curvature $\kappa$ whose boundary has
constant mean curvature $(d-1)\lambda$. We denote its radius by
$C_{\kappa,\lambda}$.

\begin{theorem}
\label{thm:weighted-inradius}
Let $(P^d,g)$ be a smooth compact connected Riemannian manifold with
smooth boundary, let $\phi\in C^\infty(P)$, and let
$m\in[d,\infty)$. Suppose that $(\kappa,\lambda)$ satisfies the ball
condition and that
\[
    \Ric_{\phi,m}
    \ge
    (m-1)\kappa g,
    \qquad
    H_\phi
    \ge
    (m-1)\lambda.
\]
Then
\[
    \operatorname{InRad}(P)
    \le
    C_{\kappa,\lambda}.
\]
If equality is attained, then $P$ is isometric to the geodesic ball of
radius $C_{\kappa,\lambda}$ in the simply connected space form of
constant curvature $\kappa$. Moreover, in the equality case
\[
    m=d
\]
and $\phi$ is constant.
\end{theorem}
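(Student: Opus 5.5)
The plan follows the standard proof of the unweighted inscribed-radius rigidity (Kasue, Li) and runs a weighted Riccati comparison along normal geodesics from the boundary. Let $\rho=d(\cdot,\partial P)$ and define the model function
\[
    s_{\kappa,\lambda}(t)=c_\kappa(t)-\lambda s_\kappa(t).
\]
It satisfies $s_{\kappa,\lambda}''+\kappa s_{\kappa,\lambda}=0$, $s_{\kappa,\lambda}(0)=1$ and $s_{\kappa,\lambda}'(0)=-\lambda$. The ball condition means exactly that $s_{\kappa,\lambda}$ has a first positive zero, and that zero is $C_{\kappa,\lambda}$.

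\textbf{Step 1 (weighted Riccati inequality).} Fix $p\in\partial P$ and let $\gamma(t)$ be the inward normal geodesic from $p$, for $t$ before the cut point to $\partial P$. Let $H(t)$ be the mean curvature of the level set $\{\rho=t\}$ at $\gamma(t)$, with the sign chosen so that $H(0)=H(p)$, and set $H_\phi(t)=H(t)+(\phi\circ\gamma)'(t)$. The Riccati equation together with $|A|^2\ge H^2/(d-1)$ gives
\[
    H'\ge \frac{H^2}{d-1}+\Ric(\gamma',\gamma').
\]
Adding $(\phi\circ\gamma)''=\nabla^2\phi(\gamma',\gamma')$ and using the elementary inequality
\[
    \frac{H^2}{d-1}+\frac{\phi'^2}{m-d}\ge\frac{(H+\phi')^2}{m-1}
\]
yields
\[
    H_\phi'\ge \frac{H_\phi^2}{m-1}+\Ric_{\phi,m}(\gamma',\gamma')\ge\frac{H_\phi^2}{m-1}+(m-1)\kappa .
\]
The initial value satisfies $H_\phi(0)\ge(m-1)\lambda$.

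\textbf{Step 2 (inradius bound).} The model quantity $-(m-1)s_{\kappa,\lambda}'/s_{\kappa,\lambda}$ solves the Riccati equation with equality and has the initial value $(m-1)\lambda$. ODE comparison therefore gives
\[
    H_\phi(t)\ge-(m-1)\frac{s_{\kappa,\lambda}'(t)}{s_{\kappa,\lambda}(t)},
\]
and the right-hand side tends to $+\infty$ as $t\uparrow C_{\kappa,\lambda}$. Hence every normal geodesic leaves the segment domain of $\rho$ no later than $C_{\kappa,\lambda}$. Since every interior point lies on a minimizing normal geodesic from $\partial P$, this gives $\rho\le C_{\kappa,\lambda}$ and so $\operatorname{InRad}(P)\le C_{\kappa,\lambda}$. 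In the same way one obtains the barrier-sense inequality
\[
    \Delta_\phi\rho\le(m-1)\frac{s_{\kappa,\lambda}'(\rho)}{s_{\kappa,\lambda}(\rho)}
\]
on all of $P$.

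\textbf{Step 3 (equality case; this is the main obstacle).} Suppose $\rho(x_0)=C_{\kappa,\lambda}$ and set $r=d(x_0,\cdot)$, so that $r+\rho\ge C_{\kappa,\lambda}$ by the triangle inequality. The weighted Laplacian comparison for $r$ (Qian, Wei--Wylie) gives
\[
    \Delta_\phi r\le(m-1)\frac{s_\kappa'(r)}{s_\kappa(r)}.
\]
On the set where $r+\rho=C_{\kappa,\lambda}$, one checks from $s_{\kappa,\lambda}(C_{\kappa,\lambda}-t)\propto s_\kappa(t)$ that the two comparison terms cancel, so $\Delta_\phi(r+\rho)\le0$ in the barrier sense. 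The delicate point is to apply Calabi's strong maximum principle near the minimum set of $r+\rho$ despite cut points, in order to conclude $r+\rho\equiv C_{\kappa,\lambda}$. This forces $\rho$ to be smooth away from $x_0$ and makes equality hold throughout Step 1: the level sets are umbilic, $\Ric_{\phi,m}(\gamma',\gamma')=(m-1)\kappa$, and the Cauchy--Schwarz step is sharp.

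\textbf{Step 4 (conclusion).} To obtain $m=d$, compare blow-up rates at $x_0$. Near $x_0$ the level sets of $\rho$ are small geodesic spheres, so $H\sim(d-1)/(C_{\kappa,\lambda}-t)$. The model gives $H_\phi\sim(m-1)/(C_{\kappa,\lambda}-t)$, while $\phi'$ stays bounded because $\phi$ is smooth. These rates agree only if $m=d$. Equality in the Cauchy--Schwarz step then forces $\phi'\equiv0$ along all normal geodesics, so $\phi$ is constant. Finally, umbilicity together with the Riccati equality gives
\[
    g=dt^2+s_{\kappa,\lambda}(t)^2g_{\partial P}
\]
in normal coordinates from $\partial P$. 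Smoothness at $x_0$, where this metric closes up, identifies $P$ with the geodesic ball of radius $C_{\kappa,\lambda}$ in the space form of curvature $\kappa$.
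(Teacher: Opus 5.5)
The paper does not prove this statement: it quotes it from Sakurai and uses it as a black box in Section~4. Your proposal instead reconstructs the classical Kasue-type argument in the weighted setting, and the outline is sound. Its main ingredients are:
\begin{enumerate}
\item the Riccati inequality for $H_\phi=-\Delta_\phi\rho$, obtained via the splitting $\frac{H^2}{d-1}+\frac{\phi'^2}{m-d}\ge\frac{(H+\phi')^2}{m-1}$;
\item comparison with $-(m-1)s_{\kappa,\lambda}'/s_{\kappa,\lambda}$;
\item the identity $s_{\kappa,\lambda}(C_{\kappa,\lambda}-r)=\mu\,s_\kappa(C_{\kappa,\lambda})\,s_\kappa(r)$ with $\mu=\lambda^2+\kappa$;
\item the blow-up comparison at the centre.
\end{enumerate}
What this buys over the citation is a transparent reason for the final clause. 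Near $x_0$ one has $\Delta r=(d-1)/r+O(r)$, while the sharp case forces $H+\phi'=(m-1)/r+O(r)$ with $\phi'$ bounded. One correction is needed there. For $m>d$, equality in the splitting inequality gives $\phi'=\frac{m-d}{d-1}H$, not $\phi'=0$. This is unbounded near $x_0$, which is another way to exclude $m>d$. For $m=d$, constancy of $\phi$ is the standing convention.

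Step~3 needs two routine adjustments.

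First, a strong maximum principle requires $\Delta_\phi(r+\rho)\le0$ on an open set, not just on $\{r+\rho=C_{\kappa,\lambda}\}$. This holds because $(s_{\kappa,\lambda}'/s_{\kappa,\lambda})'=-\mu/s_{\kappa,\lambda}^2<0$. Hence the sum of the two comparison terms is $\le0$ wherever $r+\rho\ge C_{\kappa,\lambda}$, i.e.\ everywhere.

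Second, a global Laplacian comparison for $r=d(x_0,\cdot)$ is unavailable on a manifold with boundary, since shortest paths may run along $\partial P$. You only need it near $U=\{r+\rho=C_{\kappa,\lambda}\}$, however.
\begin{itemize}
\item For $q\in U\setminus\{x_0\}$, a shortest path from $x_0$ to $q$ followed by one from $q$ to $\partial P$ has length $d(x_0,\partial P)$. It is therefore a single normal geodesic lying in $P^\circ$ except at its endpoint.
\item Consequently both pieces are unique, and $q$ is a cut point neither of $x_0$ nor of $\partial P$.
\item Shortest paths to nearby points converge to this one, so they stay interior. Hence $r$ and $\rho$ are smooth near $q$.
\end{itemize}
Hopf's classical strong maximum principle then makes $U\setminus\{x_0\}$ open in the connected set $P^\circ\setminus\{x_0\}$. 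This gives $r+\rho\equiv C_{\kappa,\lambda}$ with no Calabi-type barrier argument.
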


For the three pairs occurring in this paper, one has
\[
    C_{1,a}
    =
    \tan^{-1}\frac1a,
    \qquad
    C_{0,b}
    =
    \frac1b,
    \qquad
    C_{-1,c}
    =
    \tanh^{-1}\frac1c.
\]

We conclude this section with a simple observation which will be useful in
the proof of the ambient rigidity theorem. It explains how the weighted
curvature and boundary mean curvature assumptions descend from $\Omega$
to the totally geodesic hypersurface $\Omega_0$.

\begin{lemma}\label{lem:dimension-reduction}
Let $(\Omega^{n+1},g)$ and $f$ be as in
Theorem~\ref{thm:warped-structure}, and let
\[
    \Omega_0=\{f=0\}.
\]
Suppose that, for some $m\in[n+1,\infty)$,
\begin{equation} \notag 
    \Ric^\Omega_{\phi,m}
    \ge
    (m-1)\kappa g \qquad
    H_\phi
    \ge
    (m-1)\lambda.
\end{equation}
Then, with $\phi$ also denoting its restriction to $\Omega_0$,
\begin{equation}\label{eq:reduced-ricci}
    \Ric^{\Omega_0}_{\phi,m-1}
    \ge
    (m-2)\kappa g_{\Omega_0},
\end{equation}
and
\begin{equation}\label{eq:reduced-mean}
    H_{\phi,\partial\Omega_0}
    \ge
    (m-2)\lambda.
\end{equation}
\end{lemma}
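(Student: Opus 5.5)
The plan is to compare the intrinsic geometry of the totally geodesic hypersurface $\Omega_0$ with the ambient geometry. There are two inputs: the Gauss equation, which gives the Ricci inequality \eqref{eq:reduced-ricci}, and the Robin condition restricted to $\partial\Omega_0=\Sigma\cap\Omega_0$, which gives the mean curvature inequality \eqref{eq:reduced-mean}. By Lemma~\ref{lem:basic-identities}, along $\Omega_0$ we have $|\nabla f|=1$, so $N:=\nabla f$ is a unit normal to $\Omega_0$. Moreover $\Omega_0$ is totally geodesic.

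\emph{Step 1: curvature in the $\nabla f$ direction.} Differentiate $\nabla^2 f=-\kappa f g$ once more and apply the Ricci identity. This gives
\[
R(X,Y)\nabla f=-\kappa\bigl(X(f)Y-Y(f)X\bigr)
\]
(up to the sign convention for $R$). Hence every plane containing $\nabla f$ has sectional curvature $\kappa$, so $\langle R(X,N)N,X\rangle=\kappa|X|^2$ for $X\in T\Omega_0$. This sign/convention bookkeeping is the only delicate point in the argument; it can be checked on the model $dt^2+c_\kappa^2(t)g_{\Omega_0}$ with $f=s_\kappa(t)$.

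\emph{Step 2: the Ricci bound.} Since $\Omega_0$ is totally geodesic, the Gauss equation gives $\Ric^{\Omega_0}(X,X)=\Ric(X,X)-\kappa|X|^2$ for $X\in T\Omega_0$. Also $\nabla^2_{\Omega_0}\phi(X,X)=\nabla^2\phi(X,X)$, and $d\phi(X)$ is the same whether computed intrinsically or extrinsically. The key dimension count is
\[
(m-1)-\dim\Omega_0=(m-1)-n=m-(n+1),
\]
so the $d\phi\otimes d\phi$ coefficients coincide. (If $m=n+1$, then $\phi$ is constant and this term is absent.) Therefore
\[
\Ric^{\Omega_0}_{\phi,m-1}(X,X)=\Ric^{\Omega}_{\phi,m}(X,X)-\kappa|X|^2\ge (m-2)\kappa|X|^2,
\]
which is \eqref{eq:reduced-ricci}.

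\emph{Step 3: the boundary of $\Omega_0$.} On $\partial\Omega_0$ we have $f=0$, so $f_\nu=\lambda f=0$, i.e.\ $\nu\perp N$. Thus $\nu$ is tangent to $\Omega_0$ and is the outward unit normal of $\partial\Omega_0$ in $\Omega_0$; in particular $\Sigma$ meets $\Omega_0$ orthogonally. Since $\Omega_0$ is totally geodesic, for $X,Y\in T\partial\Omega_0$ we get $h_{\partial\Omega_0}(X,Y)=h(X,Y)$. Because $T\Sigma=T\partial\Omega_0\oplus\mathbb R N$ orthogonally, this yields $H_{\partial\Omega_0}=H-h(N,N)$.

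To compute $h(N,N)$, differentiate the Robin condition $\langle\nabla f,\nu\rangle=\lambda f$ along $N$, which is tangent to $\Sigma$. This gives
\[
\nabla^2 f(N,\nu)+\langle \nabla f,\nabla_N\nu\rangle=\lambda|\nabla f|^2 .
\]
The first term vanishes by \eqref{eq:unified-obata} because $f=0$ there. With the convention $h(X,Y)=\langle Y,\nabla_X\nu\rangle$, the second term is $h(N,N)$, so $h(N,N)=\lambda$. Since $\nu$ is the same vector for $\partial\Omega_0\subset\Omega_0$ and for $\Sigma$, the term $\partial\phi/\partial\nu$ is unchanged. Hence
\[
H_{\phi,\partial\Omega_0}=H_\phi-\lambda\ge(m-2)\lambda,
\]
which is \eqref{eq:reduced-mean}.
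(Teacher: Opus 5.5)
Your proposal is correct and follows the paper's proof essentially step for step. The Ricci bound comes from the Gauss equation for the totally geodesic $\Omega_0$ together with the dimension count $(m-1)-n=m-(n+1)$, and differentiating the Robin condition along $\nabla f$ gives $h(\nabla f,\nabla f)=\lambda$, hence $H_{\phi,\partial\Omega_0}=H_\phi-\lambda$. The only minor difference is that you get the mixed curvature $\langle R(X,\nabla f)\nabla f,X\rangle=\kappa|X|^2$ from the Ricci identity applied to $\nabla^2 f=-\kappa f g$, rather than reading it off the warped product metric $dt^2+c_\kappa^2(t)g_{\Omega_0}$; this makes that step independent of the global warped product structure.
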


\begin{proof}
By Lemma~\ref{lem:basic-identities}, $\Omega_0$ is totally geodesic.
In the warped product coordinates of
Theorem~\ref{thm:warped-structure},
\[
    g=dt^2+c_\kappa^2(t)g_{\Omega_0},
\]
and, with our curvature convention,
\[
    R^\Omega(\partial_t,X,\partial_t,X)
    =
    -\kappa |X|^2
\]
at $t=0$, for every $X\in T\Omega_0$. Hence the Gauss equation gives
\[
    \Ric^{\Omega_0}(X,X)
    =
    \Ric^\Omega(X,X)-\kappa|X|^2.
\]
Since $\Omega_0$ is totally geodesic,
\[
    \nabla_{\Omega_0}^2\phi(X,X)
    =
    \nabla_\Omega^2\phi(X,X).
\]
Furthermore,
\[
    (m-1)-n
    =
    m-(n+1).
\]
Therefore
\[
\begin{aligned}
    \Ric^{\Omega_0}_{\phi,m-1}(X,X)
    &=
    \Ric^\Omega_{\phi,m}(X,X)
    -\kappa|X|^2\\
    &\ge
    (m-2)\kappa|X|^2,
\end{aligned}
\]
which proves \eqref{eq:reduced-ricci}.

It remains to prove the boundary estimate. Let
$P\in\partial\Omega_0$. Since
\[
    f(P)=0,
    \qquad
    f_\nu(P)=\lambda f(P)=0,
\]
the outward normal $\nu$ of $\Sigma$ is tangent to $\Omega_0$ and is
therefore also the outward normal of $\partial\Omega_0$ in $\Omega_0$.

Choose an orthonormal frame
$\{e_1,\ldots,e_n\}$ in $T_P\Sigma$ such that
\[
    e_1=\nabla f(P).
\]
Differentiating the Robin condition tangentially and using
$\nabla^2f(e_i,\nu)=0$ on $\Omega_0$, we obtain
\[
    h(e_1,e_1)=\lambda,
    \qquad
    h(e_1,e_i)=0,
    \quad i=2,\ldots,n.
\]
Since $\Omega_0$ is totally geodesic, the second fundamental form of
$\partial\Omega_0$ in $\Omega_0$ is the restriction of $h$ to
$T\partial\Omega_0$. Consequently,
\[
\begin{aligned}
    H_{\phi,\partial\Omega_0}
    &=
    H_\phi-\lambda\\
    &\ge
    (m-2)\lambda,
\end{aligned}
\]
which proves \eqref{eq:reduced-mean}.
\end{proof}

\section{Rigidity from a product structure on the boundary}
\label{sec:product-rigidity}

In this section we prove Theorem~\ref{thm:product-rigidity}. One of the
main points is that the one-sided bound on the partial mean curvature in
the $N$-directions already forces the full model second fundamental form
in those directions.

As before, we treat the three cases simultaneously. Set
\[
    (\kappa,\lambda)
    =
    (1,a),\qquad
    (0,b),\qquad
    (-1,c),
\]
and let
\[
    \mu=\lambda^2+\kappa>0,
    \qquad
    \ell=n-k=\dim N.
\]
Thus $f$ satisfies \eqref{eq:three-obata}.
The assumption on the partial mean curvature can be written uniformly as
\begin{equation}\label{eq:sec3-partial-mean}
    H_N
    \le
    -\frac{\ell\kappa}{\lambda}.
\end{equation}

By Lemma~\ref{lem:basic-identities}, we may assume \eqref{eq:normalization} holds.
On the boundary this becomes
\begin{equation}\label{eq:sec3-boundary-energy}
    |\nabla_\Sigma f|^2+\mu f^2=1,
\end{equation}
while the tangential Hessian satisfies
\begin{equation}\label{eq:sec3-boundary-hessian}
    \nabla_\Sigma^2f
    =
    -f(\kappa g_\Sigma+\lambda h).
\end{equation}

We first show that $f|_\Sigma$ is independent of the $N$-variable.
This is precisely where the one-sided assumption
\eqref{eq:sec3-partial-mean} enters the proof.

Fix $p\in M$ and define
\[
    u_p:N\longrightarrow\mathbb R,
    \qquad
    u_p(z)=f(p,z).
\]
Since
\[
    \Sigma=M\times N
\]
is a Riemannian product, the fiber $\{p\}\times N$ is totally geodesic
in $\Sigma$. Taking the trace of
\eqref{eq:sec3-boundary-hessian} in the $N$-directions gives
\begin{equation}\label{eq:sec3-fiber-equation}
    \Delta_Nu_p
    +
    \bigl(\ell\kappa+\lambda H_N\bigr)u_p
    =0.
\end{equation}
Multiplying \eqref{eq:sec3-fiber-equation} by $u_p$ and integrating over
the closed manifold $N$, we obtain
\begin{equation}\label{eq:sec3-fiber-integral}
    0
    =
    -\int_N|\nabla_Nu_p|^2
    +
    \int_N
    \bigl(\ell\kappa+\lambda H_N\bigr)u_p^2.
\end{equation}
By \eqref{eq:sec3-partial-mean},
\[
    \ell\kappa+\lambda H_N\le0.
\]
Therefore both terms on the right-hand side of
\eqref{eq:sec3-fiber-integral} are nonpositive. It follows that
\[
    \int_N|\nabla_Nu_p|^2=0,
\]
and hence $u_p$ is constant on $N$.

Since $p\in M$ was arbitrary, we conclude that
\begin{equation}\label{eq:sec3-f-independent-N}
    f(p,z)=\bar f(p)
\end{equation}
for some smooth function $\bar f$ on $M$. In particular,
\[
    \nabla_Nf\equiv0
    \qquad\text{on }\Sigma.
\]

This already implies that the inequality
\eqref{eq:sec3-partial-mean} is rigid. Indeed, for
$X,Y\in TN$, the product structure and
\eqref{eq:sec3-f-independent-N} give
\[
    \nabla_\Sigma^2f(X,Y)=0.
\]
Thus \eqref{eq:sec3-boundary-hessian} yields
\[
    f\bigl(
        \kappa g_N(X,Y)+\lambda h(X,Y)
    \bigr)
    =0.
\]
At every point where $f\neq0$, we therefore have
\[
    h(X,Y)
    =
    -\frac{\kappa}{\lambda}g_N(X,Y).
\]
On the other hand, the set $\{f\neq0\}\cap\Sigma$ is dense in $\Sigma$.
Indeed, if $f$ vanished on an open subset of $\Sigma$, then
$\nabla_\Sigma f$ would also vanish there, contradicting
\eqref{eq:sec3-boundary-energy}. By continuity we conclude that
\begin{equation}\label{eq:sec3-h-equality}
    h|_{TN}
    =
    -\frac{\kappa}{\lambda}g_N
    \qquad\text{on }\Sigma.
\end{equation}

Thus the one-sided partial mean curvature condition automatically
recovers the model second fundamental form in all $N$-directions.

Since $f$ is independent of $N$, 
\eqref{eq:sec3-boundary-energy} reduces to
\begin{equation}\label{eq:sec3-M-energy}
    |\nabla_M\bar f|^2+\mu\bar f^2=1.
\end{equation}
The function $\bar f$ is nonconstant. Let $p_+$ and $p_-$ be points of
$M$ at which $\bar f$ attains its maximum and minimum, respectively.
At these two points $\nabla_M\bar f=0$, and
\eqref{eq:sec3-M-energy} implies
\[
    |\bar f(p_\pm)|
    =
    \frac1{\sqrt\mu}.
\]
Since $\bar f$ is nonconstant, its maximum and minimum cannot have the
same sign. After replacing $f$ by $-f$ if necessary, we may therefore
assume that
\[
    \bar f(p_+)=\frac1{\sqrt\mu},
    \qquad
    \bar f(p_-)=-\frac1{\sqrt\mu}.
\]

Applying Lemma~\ref{lem:distance} to
\eqref{eq:sec3-M-energy}, we obtain
\begin{equation}\label{eq:sec3-distance-lower}
    d_M(p_-,p_+)
    \ge
    \frac{\pi}{\sqrt\mu}.
\end{equation}

On the other hand, the Bakry-\'Emery Ricci curvature assumption in
Theorem~\ref{thm:product-rigidity} is precisely
\[
    \Ric^M_{\phi,m}
    \ge
    (m-1)\mu g_M.
\]
By the weighted maximal diameter theorem,
\[
    \diam(M)
    \le
    \frac{\pi}{\sqrt\mu}.
\]
Together with \eqref{eq:sec3-distance-lower}, this gives
\[
    d_M(p_-,p_+)
    =
    \diam(M)
    =
    \frac{\pi}{\sqrt\mu}.
\]
Hence the equality case of
Theorem~\ref{thm:weighted-diameter} yields
\[
    (M,g_M)
    \cong
    \mathbb S^k\left(\frac1{\sqrt\mu}\right).
\]

Moreover, equality holds in the distance estimate. Therefore, along
every unit-speed minimizing geodesic $\gamma$ from $p_-$ to $p_+$,
\[
    \bar f(\gamma(r))
    =
    -\frac1{\sqrt\mu}\cos(\sqrt\mu\,r).
\]
Using polar coordinates centered at $p_+$, we obtain globally
\[
    \bar f(p)
    =
    \frac1{\sqrt\mu}
    \cos\bigl(\sqrt\mu\,d_M(p,p_+)\bigr).
\]
Consequently,
\[
    \Sigma
    =
    \mathbb S^k\left(\frac1{\sqrt\mu}\right)\times N,
\]
and the restriction of $f$ to $\Sigma$ depends only on the spherical
factor.

Let
\[
    \Sigma_+=\{x\in\Sigma:f(x)\ge0\}.
\]
Using polar coordinates centered at $p_+$ on $M$, we may write
\[
    \Sigma_+
    =
    \mathbb S^{k-1}
    \times
    \left[0,\frac{\pi}{2\sqrt\mu}\right]_r
    \times N,
\]
with metric
\begin{equation}\label{eq:sec3-Sigma-plus}
    g_{\Sigma_+}
    =
    dr^2
    +
    \frac1\mu\sin^2(\sqrt\mu\,r)
    g_{\mathbb S^{k-1}}
    +
    g_N,
\end{equation}
and
\[
    f
    =
    \frac1{\sqrt\mu}\cos(\sqrt\mu\,r).
\]

By Theorem~\ref{thm:warped-structure}, $\Omega$ is a
$\mathbb Z_2$-symmetric domain in
\[
    \Omega_0\times I_\kappa,
    \qquad
    g
    =
    dt^2+c_\kappa^2(t)g_{\Omega_0},
\]
and
\[
    f=s_\kappa(t).
\]
The positive part of the boundary is the graph
\[
    \Sigma_+
    =
    \{(x,\Phi(x)):x\in\Omega_0\}.
\]
Since $f|_{\Sigma_+}$ depends only on $r$,
the graph function also depends only on $r$:
\[
    \Phi=\Phi(r).
\]
Furthermore,
\begin{equation}\label{eq:sec3-Phi-r}
    s_\kappa(\Phi(r))
    =
    \frac1{\sqrt\mu}\cos(\sqrt\mu\,r).
\end{equation}

For
\[
    \tau_\kappa(s)
    =
    \frac{s_\kappa(s)}{c_\kappa(s)},
\]
introduce a new radial coordinate $s$ by
\begin{equation}\label{eq:sec3-coordinate-change}
    \tau_\kappa(s)
    =
    \frac1\lambda\sin(\sqrt\mu\,r).
\end{equation}
Let
\[
    R=C_{\kappa,\lambda}.
\]
Then
\[
    \tau_\kappa(R)=\frac1\lambda,
    \qquad
    s_\kappa(R)=\frac1{\sqrt\mu},
\]
so that $s\in[0,R]$.

Comparing the metric induced on the graph $t=\Phi(r)$ with
\eqref{eq:sec3-Sigma-plus}, and using
\eqref{eq:sec3-Phi-r} and
\eqref{eq:sec3-coordinate-change}, we obtain
\[
    g_{\Omega_0}
    =
    ds^2
    +
    s_\kappa^2(s)g_{\mathbb S^{k-1}}
    +
    \frac{\mu}{\lambda^2}
    c_\kappa^2(s)g_N.
\]
Thus
\[
    \Omega_0
    \cong
    [0,R]\times\mathbb S^{k-1}\times N.
\]

Using
\eqref{eq:sec3-Phi-r} and
\eqref{eq:sec3-coordinate-change}, the boundary equation can be written
in the form
\[
    \frac{\tau_\kappa^2(s)}{\tau_\kappa^2(R)}
    +
    \frac{s_\kappa^2(t)}{s_\kappa^2(R)}
    =1.
\]
Hence
\[
    \Omega
    =
    B_\kappa(R)\times N,
\]
where
\[
    B_\kappa(R)
    =
    \left\{
    (\omega,s,t):
    \frac{\tau_\kappa^2(s)}{\tau_\kappa^2(R)}
    +
    \frac{s_\kappa^2(t)}{s_\kappa^2(R)}
    \le1
    \right\},
\]
and the metric is
\begin{equation}\label{eq:sec3-product-before-polar}
\begin{split}
    g
    ={}&
    dt^2
    +
    c_\kappa^2(t)
    \bigl(
        ds^2+s_\kappa^2(s)g_{\mathbb S^{k-1}}
    \bigr)\\
    &+
    \frac{\mu}{\lambda^2}
    c_\kappa^2(t)c_\kappa^2(s)g_N.
\end{split}
\end{equation}

The first line of \eqref{eq:sec3-product-before-polar}, restricted to
$B_\kappa(R)$, is the metric of the geodesic ball of radius $R$ in the
$(k+1)$-dimensional simply connected space form of sectional curvature
$\kappa$. Under the corresponding geodesic polar coordinates
$(\rho,\theta)\in[0,R]\times\mathbb S^k$, one has
\[
    c_\kappa(t)c_\kappa(s)=c_\kappa(\rho).
\]
Therefore \eqref{eq:sec3-product-before-polar} becomes
\begin{equation}\label{eq:sec3-final}
    g
    =
    d\rho^2
    +
    s_\kappa^2(\rho)g_{\mathbb S^k}
    +
    \frac{\mu}{\lambda^2}
    c_\kappa^2(\rho)g_N,
    \qquad
    0\le\rho\le R.
\end{equation}

Finally, the three choices of $(\kappa,\lambda)$ give
\[
\begin{array}{c|c|c|c}
\kappa & \lambda & R=C_{\kappa,\lambda}
& \mu=\lambda^2+\kappa\\ \hline
1 & a & \tan^{-1}(1/a) & a^2+1\\[1mm]
0 & b & 1/b & b^2\\[1mm]
-1 & c & \tanh^{-1}(1/c) & c^2-1.
\end{array}
\]
Substituting these values into \eqref{eq:sec3-final} gives precisely
the three metrics stated in Theorem~\ref{thm:product-rigidity}.
\qed

\begin{remark}
Based on the weighted version of Cheng’s maximal diameter theorem, we also conclude that
\[
    m=k
    \qquad\text{and}\qquad
    \phi\equiv\mathrm{constant}.
\]
\end{remark}

\section{Rigidity under ambient Bakry-\'Emery curvature bounds}
\label{sec:ambient-rigidity}

We now prove Theorem~\ref{thm:ambient-rigidity}. As in the previous
section, the three cases will be treated simultaneously. Recall 
\[
    (\kappa,\lambda)
    =
    (1,a),\qquad(0,b),\qquad(-1,c),
\]
and set
\[
    \mu=\lambda^2+\kappa,
    \qquad
    R=C_{\kappa,\lambda}.
\]
Thus
\begin{equation}\label{eq:sec4-model-radius}
    \tau_\kappa(R)=\frac1\lambda,
    \qquad
    s_\kappa(R)=\frac1{\sqrt\mu}.
\end{equation}

The assumptions of the theorem are
\begin{equation}\notag
    \Ric^\Omega_{\phi,m}
    \ge
    (m-1)\kappa g
\qquad
    H_\phi
    \ge
    (m-1)\lambda.
\end{equation}

We first justify the use of
Theorem~\ref{thm:warped-structure}. Only the spherical case requires an
additional argument, since there we must exclude critical points of $f$.

Suppose that $\kappa=1$ and that $p\in\Omega^\circ$ is a critical point.
After normalization,
\[
    |\nabla f|^2+f^2=1,
\]
so $f(p)=\pm1$. Let
$\gamma:[0,\ell]\to\Omega$ be a minimizing geodesic from $p$ to
$\Sigma$. The weighted inscribed-radius theorem applied directly to
$\Omega$ gives
\[
    \ell
    \le
    R
    =
    \tan^{-1}\frac1a
    <
    \frac{\pi}{2}.
\]
Along $\gamma$,
\[
    (f\circ\gamma)''+(f\circ\gamma)=0,
    \qquad
    (f\circ\gamma)'(0)=0,
\]
and hence
\[
    f(\gamma(t))
    =
    \pm\cos t.
\]
Since $\gamma'(\ell)=\nu$, the Robin condition would imply
\[
    \mp\sin\ell
    =
    \pm a\cos\ell,
\]
which is impossible for $0<\ell<\pi/2$. Thus $f$ has no critical points.

We may therefore apply
Theorem~\ref{thm:warped-structure} in all three cases. We obtain
\[
    \Omega
    \subset
    \Omega_0\times I_\kappa,
    \qquad
    g=dt^2+c_\kappa^2(t)g_{\Omega_0},
\]
where
\[
    \Omega_0=\{f=0\},
    \qquad
    f=s_\kappa(t),
\]
and the boundary is represented by the graphs
\[
    t=\pm\Phi(x),
    \qquad x\in\Omega_0.
\]
The graph function satisfies
\begin{equation}\label{eq:sec4-graph}
    \frac{c_\kappa(\Phi)}
    {\sqrt{
       1+c_\kappa^{-2}(\Phi)
       |\nabla_{\Omega_0}\Phi|^2
    }}
    =
    \lambda s_\kappa(\Phi).
\end{equation}

Since the left-hand side of \eqref{eq:sec4-graph} is at most
$c_\kappa(\Phi)$,
\[
    \lambda s_\kappa(\Phi)
    \le
    c_\kappa(\Phi).
\]
Hence
\[
    0\le\Phi\le R.
\]
Because $\Phi>0$ in the interior and $\Phi=0$ on
$\partial\Omega_0$, it attains a positive maximum at an interior point.
At a maximum point $\nabla\Phi=0$, so
\eqref{eq:sec4-graph} gives
\[
    c_\kappa(\Phi)=\lambda s_\kappa(\Phi),
\]
and consequently
\[
    \max_{\Omega_0}\Phi=R.
\]
Set
\[
    A=\{x\in\Omega_0:\Phi(x)=R\}.
\]
Then $A$ is nonempty and compact.

Define $v:\Omega_0\to[0,R]$ implicitly by
\begin{equation}\label{eq:sec4-v-definition}
    \tau_\kappa(v)
    =
    \frac1\lambda
    \sqrt{1-\mu s_\kappa^2(\Phi)}.
\end{equation}
By \eqref{eq:sec4-model-radius},
\[
    v=0
    \quad\text{on }A,
    \qquad
    v=R
    \quad\text{on }\partial\Omega_0.
\]
Moreover, $v$ is smooth wherever $0<v<R$.

We claim that
\begin{equation}\label{eq:sec4-eikonal}
    |\nabla_{\Omega_0}v|=1
    \qquad
    \text{on }
    \Omega_0\setminus(A\cup\partial\Omega_0).
\end{equation}
Indeed, squaring \eqref{eq:sec4-graph} gives
\[
    |\nabla\Phi|^2
    =
    \frac{
        c_\kappa^2(\Phi)
        \bigl(1-\mu s_\kappa^2(\Phi)\bigr)}
        {\lambda^2s_\kappa^2(\Phi)}.
\]
Differentiating \eqref{eq:sec4-v-definition} and using
\[
    \tau_\kappa'(t)=\frac1{c_\kappa^2(t)}
\]
then gives \eqref{eq:sec4-eikonal} by a direct calculation.

For $t\in[0,R]$, set
\[
    T_t=\{x\in\Omega_0:v(x)=t\}.
\]
Let $x\in T_t$ and let $\gamma$ be any curve joining a point of $A$
to $x$. Using \eqref{eq:sec4-eikonal},
\[
    t
    =
    v(x)-v(A)
    \le
    L(\gamma).
\]
Taking the infimum over all such curves yields
\begin{equation}\label{eq:sec4-distance-A}
    d(A,T_t)\ge t.
\end{equation}
In particular,
\begin{equation}\label{eq:sec4-inradius-lower}
    d(A,\partial\Omega_0)\ge R.
\end{equation}

We now apply the dimension-reduction lemma.
By Lemma~\ref{lem:dimension-reduction},
\[]
    \Ric^{\Omega_0}_{\phi,m-1}
    \ge
    (m-2)\kappa g_{\Omega_0}\qquad
    H_{\phi,\partial\Omega_0}
    \ge
    (m-2)\lambda.
\]

The weighted maximal inscribed-radius theorem, now applied to
$\Omega_0$, gives
\[
    \operatorname{InRad}(\Omega_0)
    \le
    R.
\]
Together with \eqref{eq:sec4-inradius-lower}, this shows that equality
holds. Hence
$(\Omega_0,g_{\Omega_0})$
is isometric to the geodesic ball of radius $R$ in the
$n$-dimensional simply connected space form of sectional curvature
$\kappa$.

Moreover, $A$ consists of a single point, say $x_0$, which is the center
of this ball.

We next identify $v$. From \eqref{eq:sec4-distance-A},
\[
    v(x)\le d(x_0,x).
\]
Applying the same argument from $x$ to the boundary gives
\[
    R-v(x)
    \le
    d(x,\partial\Omega_0).
\]
Since $\Omega_0$ is the model ball centered at $x_0$,
\[
    d(x,\partial\Omega_0)
    =
    R-d(x_0,x).
\]
Consequently,
\[
    v(x)\ge d(x_0,x),
\]
and therefore
\begin{equation}\label{eq:sec4-v-distance}
    v(x)=d(x_0,x).
\end{equation}

Substituting \eqref{eq:sec4-v-distance} into
\eqref{eq:sec4-v-definition}, we obtain
\begin{equation}\label{eq:sec4-boundary-equation}
    \frac{
        \tau_\kappa^2(d(x,x_0))
    }{
        \tau_\kappa^2(R)
    }
    +
    \frac{
        s_\kappa^2(\Phi(x))
    }{
        s_\kappa^2(R)
    }
    =1.
\end{equation}

The equation \eqref{eq:sec4-boundary-equation} is exactly the equation
of the geodesic sphere of radius $R$ in the $(n+1)$-dimensional simply
connected space form of curvature $\kappa$, written in the warped product
coordinates
\[
    dt^2+c_\kappa^2(t)g_{\Omega_0}.
\]
Equivalently, this is the same model calculation as in
\cite[Proposition~2.6]{LiuYang}. Therefore $\Sigma$ is the geodesic
sphere of radius $R$, and $\Omega$ is the corresponding geodesic ball.

Thus
\[
    (\Omega,g)
    \cong
    B_{\mathbb M_\kappa^{n+1}}(R),
\]
where $\mathbb M_\kappa^{n+1}$ denotes the simply connected
$(n+1)$-dimensional space form of sectional curvature $\kappa$.

Finally,
\[
    R=
    \begin{cases}
        \displaystyle \tan^{-1}\frac1a,
            & \kappa=1,\\[2mm]
        \displaystyle \frac1b,
            & \kappa=0,\\[2mm]
        \displaystyle \tanh^{-1}\frac1c,
            & \kappa=-1,
    \end{cases}
\]
which gives precisely the three conclusions of
Theorem~\ref{thm:ambient-rigidity}.
\qed

\begin{remark}
Once the above rigidity has been established, the weighted maximal
inscribed-radius theorem can be applied once more to $\Omega$ itself.
Since its inscribed radius is exactly $R=C_{\kappa,\lambda}$, the equality
case also implies
\[
    m=n+1
    \qquad\text{and}\qquad
    \phi\equiv\mathrm{constant}.
\]
Thus the weighted rigidity assumptions ultimately collapse to the
unweighted model in the equality case.
\end{remark}

\bibliographystyle{plain}
\bibliography{ref}

\vskip .5cm
\noindent

\end{document}